\newtheorem{thm}{Theorem}[section]
\newtheorem{lem}[thm]{Lemma}
\newtheorem{cor}[thm]{Corollary}
\newtheorem{fact}[thm]{Fact}
\newtheorem{conjecture}[thm]{Conjecture}
\theoremstyle{definition}
\newtheorem{definition}[thm]{Definition}
\theoremstyle{remark}
\newtheorem{remark}[thm]{Remark}
\numberwithin{equation}{section}
\newcommand{\be}{\begin{equation}}
\newcommand{\ee}{\end{equation}}
\newcommand{\bea}{\begin{eqnarray}}
\newcommand{\eea}{\end{eqnarray}}
\newcommand{\nn}{\nonumber}
\newcommand\R{\mathbb{R}}
\newcommand\N{\mathbb{N}}
\newcommand\ext{\text{ext}}
\newcommand{\bp}{\text{bp}}
\newcommand{\cp}{\text{cp}}
\newcommand{\cc}{\text{cc}}
\title{Regarding two conjectures on clique and biclique partitions}
\author{Dhruv Rohatgi \hspace{10pt} John C. Urschel \hspace{10pt} Jake Wellens}
\begin{document} 
\maketitle

\begin{abstract}
For a graph $G$, let $\cp(G)$ denote the minimum number of cliques of $G$ needed to cover the edges of $G$ exactly once. Similarly, let $\bp_k(G)$ denote the minimum number of bicliques (i.e. complete bipartite subgraphs of $G$) needed to cover each edge of $G$ exactly $k$ times. We consider two conjectures -- one regarding the maximum possible value of $\cp(G) + \cp(\overline{G})$ (due to de Caen, Erd\H{o}s, Pullman and Wormald) and the other regarding $\bp_k(K_n)$ (due to de Caen, Gregory and Pritikin). We disprove the first, obtaining improved lower and upper bounds on $\max_G \cp(G) + \cp(\overline{G})$, and we prove an asymptotic version of the second, showing that $\bp_k(K_n) = (1+o(1))n$. 

\end{abstract}

\section{Introduction}

For a fixed family of graphs $\mathcal{F}$, an $\mathcal{F}$-partition of a graph $G$ is a collection $\mathcal{C} = \{H_1, \dots, H_\ell\}$ of subgraphs $H_i \subset G$ such that each edge of $G$ belongs to exactly one $H_i \in \mathcal{C}$, and each $H_i$ is isomorphic to some graph in $\mathcal{F}$. When $\mathcal{F} = \{K_r\}_{r \geq 2}$, we refer to $\mathcal{F}$-partitions as \emph{clique partitions}, and when $\mathcal{F} = \{K_{s, t}\}_{s, t \geq 1}$, the corresponding partitions are called \emph{biclique partitions}. The size $|\mathcal{C}|$ of the smallest clique partition of $G$ is called the \emph{clique partition number} of $G$, denoted $\cp(G)$. The \emph{biclique partition number} $\bp(G)$ is defined analogously. Both $\cp(G)$ and $\bp(G)$ (and their many variants) are NP-hard to compute in general graphs, but have been studied extensively from a combinatorial perspective, in part because of their connections to various areas of computer science (see, e.g. \cite{Jukna2006}). In this paper, we consider two longstanding combinatorial questions related to these quantities.

\subsection[]{Biclique partitions of $K_n$}

In 1971, Graham and Pollak \cite{Graham1971} showed that, for every $n \geq 2$,
\be
\bp(K_n) = n - 1.
\ee
In particular, the edges of $K_n$ can be partitioned into $n-1$ stars $$K_{1, n-1}, K_{1, n-2}, \dots, K_{1,1}$$ centered at different vertices, while the corresponding lower bound holds by an elegant linear algebraic argument. The lower bound argument easily generalizes to give 
\be\label{k_lowerbound}
\bp_k(K_n) \geq n -1
\ee
for any $k$, where $\bp_k(G)$ is the size of the smallest collection $\{H_1, \dots, H_\ell\}$ of bicliques $H_i \subset G$ such that each edge of $G$ belongs to $H_i$ for exactly $k$ different values of $i \in [\ell]$. As a matter of notation, such a collection is called a \emph{$k$-biclique cover} of $G$. More generally, a \emph{$\{k_1,\dots,k_t\}$-biclique cover} of $G$ is a collection $\{H_1,\dots,H_\ell\}$ of bicliques $H_i \subset G$ such that for each edge of $G$ there is some $k \in \{k_1,\dots,k_t\}$ such that the edge belongs to exactly $k$ of the bicliques.

In 1993, de Caen, Gregory and Pritikin conjectured that (\ref{k_lowerbound}) is tight for sufficiently large $n$:

\begin{conjecture}[de Caen et al.~\cite{deCaen1993}]\label{deCaen conj}
For every positive integer $k$, $$\emph{\bp}_{k}(K_n) = n-1$$ for all sufficiently large $n$.
\end{conjecture}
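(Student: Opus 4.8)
The lower bound $\bp_k(K_n) \ge n-1$ is already supplied by the Graham--Pollak estimate (\ref{k_lowerbound}), so the whole content of the conjecture is in the upper bound: for each fixed $k$ and all large $n$ one must produce a $k$-biclique cover of $K_n$ of size \emph{exactly} $n-1$. Write $S_k = \{\, n \ge 1 : \bp_k(K_n) = n-1 \,\}$; note $1 \in S_k$ trivially, and $S_1 = \N$ by the star partition. The plan is to factor the problem into a combinatorial ``substitution'' step, an elementary number-theoretic bootstrapping step, and one hard explicit construction.

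First, a substitution lemma: if $n = t_1 + \cdots + t_s$ with $s \in S_k$ and every $t_j \in S_k$, then $n \in S_k$. Indeed, one splits $V(K_n)$ into blobs $B_1, \dots, B_s$ with $|B_j| = t_j$, covers each internal clique $K[B_j]$ with an optimal $k$-biclique cover ($t_j - 1$ bicliques), and takes an optimal $k$-biclique cover $\{(X_i, Y_i)\}_{i=1}^{s-1}$ of $K_s$ on the index set $\{1,\dots,s\}$, blowing up each pair to $\bigl(\bigcup_{v \in X_i} B_v,\ \bigcup_{v \in Y_i} B_v\bigr)$. Each blown-up pair is a biclique of $K_n$, an edge between $B_u$ and $B_w$ (with $u \ne w$) is covered exactly as often as $\{u,w\}$ is in the $K_s$ cover --- namely $k$ times --- and edges inside a blob are handled by that blob's cover. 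The count is $\sum_j (t_j - 1) + (s - 1) = n - 1$, and since $1 \in S_k$, parts of size $1$ are free padding.

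Second, the substitution lemma bootstraps any ``doubling run'' of $S_k$ to a cofinite set: if $[a, 2a] \subseteq S_k$ for some $a$, then $[a, \infty) \subseteq S_k$. Fixing a target $n$, choose $s \in [a,2a]$ and write $n$ as a sum of $r$ parts drawn from $[a,2a]$ together with $s - r$ singletons ($0 \le r \le s$); as $s$ and $r$ vary the reachable values form $\bigcup_{r \ge 0}\bigl[\, a + r(a-1),\ 2a + r(2a-1)\,\bigr]$, and because the run has length $\ge a$ these intervals overlap consecutively, so their union is all of $[a, 4a^2]$. Applying the lemma again with the longer run $[a, 4a^2]$, then $[a, 16a^4]$, and so on, fills all of $[a, \infty)$. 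This step is routine.

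What remains --- and what I expect to be the real obstacle --- is the seed: for each $k$, exhibit a single integer $a = a(k)$ for which $\bp_k(K_n) = n - 1$ holds for \emph{every} $n \in [a, 2a]$. Such covers are rigid: a dimension count gives $\sum_i s_i t_i = k\binom{n}{2}$ with only $n-1$ bicliques, forcing the $K_{s_i, t_i}$ to be nearly balanced, of typical shape $\sim \sqrt{kn} \times \sqrt{kn}$, so one is led toward an algebraic construction --- identifying $V(K_n)$ with $\F_q \cup \{\infty\}$ or with the points of a low-dimensional affine space over a small field and taking bicliques to be products of additive and multiplicative cosets, or else superposing $k$ suitably shifted copies of the Graham--Pollak star partition and arranging that the surplus overlaps cancel exactly --- and then one must handle the remaining values of $n$ in $[a,2a]$ by bounded perturbations (a few extra vertices absorbed into existing bicliques). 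The difficulty is precisely that the $o(1)$ slack in the asymptotic bound $\bp_k(K_n) = (1+o(1))n$ is what makes \emph{near}-tight covers plentiful: equality in the Graham--Pollak argument pins down the inertia and much of the algebraic structure of the associated quadratic-form decomposition, and reconciling that rigidity with honest $0/1$-realizability simultaneously over an entire interval of $n$ is where the genuine work lies.
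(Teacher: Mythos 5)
This statement is a \emph{conjecture} in the paper, not a theorem: the authors do not prove it, and only establish the weaker asymptotic form $\bp_k(K_n) = (1+o(1))n$ (Theorem \ref{thm:bicliqueconstruction}), via a Nisan--Wigderson-style design plus Alon's $\{1,2\}$-covering and a padding step. The conjecture itself is known only for $k \le 18$, through explicit design-theoretic constructions of de Caen, Gregory and Pritikin. So your proposal should be judged as an attempted proof of an open problem.

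Within that framing, your first two steps are fine but not new. The substitution lemma is correct as stated (cross-blob edges inherit exactly the multiplicity $k$ from the $K_s$ cover, internal edges get $k$ from the blob covers, and the count $\sum_j(t_j-1)+(s-1)=n-1$ is right); this is precisely the ``compounding'' the paper alludes to when it says the best known general bound $O(kn)$ comes from compounding a small-$k$ construction. The numerical-semigroup bootstrapping from a run $[a,2a]\subseteq S_k$ to cofiniteness is also routine. The genuine gap is exactly where you flag it: you never produce the seed, i.e.\ an interval $[a(k),2a(k)]$ on which $\bp_k(K_n)=n-1$ holds. Everything you write about the seed is speculation about what a construction \emph{might} look like (algebraic cosets, superposed star partitions with cancelling overlaps), not a construction. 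Since the seed is the entire content of the conjecture --- the reduction steps were already implicit in the literature --- the proposal proves nothing beyond what is known. If your goal is to match what the paper actually establishes, the relevant target is the $(1+o(1))n$ bound, for which the paper's route (a $(n,k\sqrt n,1,\lfloor k/2\rfloor)$-design giving a $\{2\lfloor k/2\rfloor-2,\,2\lfloor k/2\rfloor\}$-cover, corrected on the ``triple-edge'' cliques by Alon's $\{1,2\}$-coverings and finished with $n$ padding stars) is quite different from, and much weaker than, the exact covers your plan requires.
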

The same authors prove their conjecture for each $k \leq 18$, using special constructions from design theory \cite{deCaen1993}. However, the best-known upper bound for general $k$ is $\bp_{k}(K_n) = O(kn)$, obtained by simply compounding a small-$k$ construction.

In Section \ref{section:bp}, we show that, to leading order, Conjecture \ref{deCaen conj} is true.

\begin{thm}
For every positive integer $k$, $$\emph{\bp}_{k}(K_n) = \left(1 + o(1)\right) n.$$
\end{thm}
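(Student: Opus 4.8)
The plan is to prove the upper bound $\bp_k(K_n) \le (1+o(1))n$ by a probabilistic / algebraic construction that builds a $k$-biclique cover of $K_n$ from near-optimal biclique partitions of graphs closely related to $K_n$, exploiting the freedom that each edge must be covered $k$ times rather than once. The starting observation is that if we could partition the edge set of $K_n$ into $k$ graphs $G_1, \dots, G_k$, each of which has a biclique partition of size roughly $n/k$, we would be done; but of course no such partition into sparse-in-$\bp$ pieces exists. Instead I would look for a collection of $m = (1+o(1))n$ bicliques in $K_n$ such that the multiset of edges they cover is \emph{exactly} $k$ copies of $E(K_n)$. Equivalently, writing each biclique $K_{S,T}$ as the rank-$\le 2$ symmetric matrix $\mathbf{1}_S \mathbf{1}_T^\top + \mathbf{1}_T \mathbf{1}_S^\top$ (off-diagonal), we want a sum of $m$ such matrices equal to $k(J - I)$. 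This reframes the problem as a low-rank / combinatorial decomposition question and suggests using structured families of bicliques whose incidence vectors we understand well.

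\medskip

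\noindent The concrete approach I would pursue: take a prime power $q$ and use the affine or projective plane of order $q$, or more simply partition $[n]$ into $t \approx n/q$ blocks of size $\approx q$ and cover within-block and between-block edges separately. Between-block edges (for a pair of blocks $B_i, B_j$) form a copy of $K_{q,q}$, a single biclique — so all between-block edges are covered once by $\binom{t}{2} \approx n^2/(2q^2)$ bicliques, which is too many unless we are cleverer. The better idea is to cover the "quotient" structure: contract each block to a point, get $K_t$, take a biclique partition of $K_t$ of size $t-1$, and blow each biclique of $K_t$ up into a biclique of $K_n$; this covers all between-block edges exactly once using only $t-1 = (1+o(1))n/q$ bicliques. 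Doing this $k$ times with $k$ \emph{different} partitions of $[n]$ into blocks — chosen so that every pair of vertices lands in the same block for at most a controlled number of the partitions — we cover between-block edges close to $k$ times, and we handle the deficiency (pairs co-blocked too often or too rarely) together with the within-block edges by a cheap correction. The within-block edges live in $t$ disjoint copies of $K_q$, each needing only $\bp_k(K_q) \le \text{const}\cdot q$ bicliques in the worst case, for a total of $O(t q) = O(n)$ — but to get the \emph{leading} constant to be $1$ we instead want within-block contributions to be lower-order, which forces $q = \omega(1)$, and then we recurse or use the known small-$k$ exact results inside blocks once $q$ is large enough that $\bp_k(K_q)=q-1$... this circularity is the crux.

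\medskip

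\noindent I expect the main obstacle to be precisely controlling the overlap structure so that the total edge-multiplicity is \emph{exactly} $k$ everywhere, not just approximately. A cover in which most edges are covered exactly $k$ times but a sparse set $E_{\mathrm{bad}}$ is covered $k \pm O(1)$ times must be repaired: edges covered too few times can be fixed by adding single-edge bicliques $K_{1,1}$ (at additive cost $|E_{\mathrm{bad}}|$, which must be $o(n)$ — so $E_{\mathrm{bad}}$ must be genuinely sparse, and controlling its size is the real work), while edges covered \emph{too many} times are fatal because one cannot subtract a biclique. So the construction must be arranged to only ever \emph{under}-cover outside a sparse set, e.g. by using partitions of $[n]$ into blocks via a resolvable design or a random partition and a union bound to guarantee no pair is co-blocked more than, say, $\log n$ times across the $k$ rounds, then throwing away (under-covering) the rare heavy pairs and patching them with $K_{1,1}$'s. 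Making the heavy set provably $o(n)$ in size — or better, sharpening to get the constant — is where I'd spend most of the effort; the algebraic lower bound $\bp_k(K_n) \ge n-1$ from \eqref{k_lowerbound} is already in hand and needs nothing further.
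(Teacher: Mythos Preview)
Your proposal does not close the gap, and the obstruction is exactly the one you flag as ``the crux'' without resolving. With $k$ block-partitions into parts of size $q$, the blown-up biclique partitions contribute $k(t-1)\approx kn/q$ bicliques and leave every pair that is co-blocked in at least one partition under-covered. The number of such under-covered pairs (counted with the multiplicity of their deficiency) is $\sum_{\text{partitions}} t\binom{q}{2}\approx knq/2$, and this quantity is $\Theta(n)$ for every choice of $q\ge 2$ --- it is \emph{not} possible to make $|E_{\mathrm{bad}}|=o(n)$. Patching with single edges therefore costs $\Theta(knq)$; patching by a biclique partition inside each block costs $k\cdot t\cdot(q-1)\approx kn$; and recursion on the $K_q$'s is circular, as you note. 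In every regime the total is $\Theta(kn)$, which merely reproduces the trivial compounding bound and does not yield a leading constant of~$1$.

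The paper's construction inverts your architecture: the $n$-order term does \emph{not} come from a quotient-and-blow-up step but from $n$ vertex-centred stars $D_1,\dots,D_n$, one at each vertex. The point is that a family of $n$ such stars can cover any prescribed edge $0$, $1$, or $2$ times (include $\{i,j\}$ in neither, one, or both of $D_i,D_j$). So the problem reduces to first arranging, with only $o(n)$ bicliques, that every edge's remaining deficiency lies in $\{0,1,2\}$. This is done with a Nisan--Wigderson design: assign to each vertex $i$ a set $S_i\subseteq[d]$ of size $\lfloor k/2\rfloor$ with pairwise intersections $\le 1$ over a universe of size $d=O(\sqrt{n})$, and take the $d$ ``coordinate'' bicliques $B_\ell=\{i:\ell\in S_i\}\times\{i:\ell\notin S_i\}$. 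Edge $\{i,j\}$ is then covered $|S_i\triangle S_j|\in\{2\lfloor k/2\rfloor-2,\,2\lfloor k/2\rfloor\}$ times, leaving a deficiency in $\{0,2\}$ (even $k$) or $\{1,3\}$ (odd $k$, reduced to $\{1,2\}$ by an Alon-type $\{1,2\}$-cover costing $O(n^{3/4})$). The stars then finish exactly. The missing idea in your plan is this flexible star-padding layer; without it, the approach cannot escape the $O(kn)$ barrier.
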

More precisely, we construct a family of designs (inspired by classical ideas of Nisan and Wigderson \cite{Nisan1989}), that yields a $k$-covering of $K_n$ by at most $n + 2kn^{3/4} + k\sqrt{n} $ complete bipartite subgraphs.



\subsection[]{Clique partitions of $G$ and $\overline{G}$}

In 1986, de Caen, Erd\H{o}s, Pullman and Wormald \cite{DEPW} investigated the maximum value of $\cp(G) + \cp(\overline{G})$ over the set $\mathcal{G}_n$ of all graphs $G$ on $n$ vertices, and proved that
\be\label{upper and lower}
\frac{7n^2}{25} + O(n) \leq \max_{G \in \mathcal{G}_n}\, \cp(G) + \cp(\overline{G}) \leq \frac{13n^2}{30} + O(n).\ee
They conjectured that the lower bound $\frac{7n^2}{25}$ is tight up to $o(n^2)$ terms, and left closing the gap in (\ref{upper and lower}) as an open problem.\footnote{In the same paper \cite{DEPW}, the authors solve the corresponding problem for $\cc(G) + \cc(\overline{G})$, where $\cc(G)$ is the minimal number of cliques in $G$ needed to cover every edge \textit{at least} once, showing that $\max_{G \in \mathcal{G}_n} \cc(G) + \cc(\overline{G}) =  \frac{n^2}{4}(1+o(1))$. This is tight up to the $o(1)$ error by $K_{n/2, n/2}$, and the error term was later removed by Pyber \cite{Pyber} for $n > 2^{1500}$.} 
\begin{conjecture}[de Caen et al.~\cite{DEPW}]\label{DEPW_conj}
$$\max_{G \in \mathcal{G}_n} \, \emph{\cp}(G) + \emph{\cp}(\overline{G}) \sim \frac{7}{25}n^2.$$
\end{conjecture}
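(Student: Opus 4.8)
Since the lower bound $\max_{G\in\mathcal{G}_n}\cp(G)+\cp(\overline{G})\ge\tfrac{7}{25}n^2+O(n)$ is already supplied by the construction behind \eqref{upper and lower}, proving Conjecture~\ref{DEPW_conj} reduces to the matching upper bound
\[
\cp(G)+\cp(\overline{G})\le\tfrac{7}{25}n^2+o(n^2)\qquad\text{for every }G\in\mathcal{G}_n,
\]
which would sharpen the bound $\tfrac{13}{30}n^2+O(n)$ of \eqref{upper and lower}. I would phrase this as a two-coloring problem: color $E(K_n)$ red and blue according to membership in $G$ and $\overline{G}$, and bound the total number of monochromatic cliques needed to partition the red edges plus the number needed for the blue edges. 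The bookkeeping device is the identity $\cp(H)=e(H)-\mathrm{sav}(H)$, where $\mathrm{sav}(H)=\max\sum_{Q}\bigl(\binom{|Q|}{2}-1\bigr)$ is the maximum ``savings'' over clique partitions $\{Q\}$ of $H$ --- a clique $Q$ on $|Q|$ vertices replaces $\binom{|Q|}{2}$ edges by one part. Hence $\cp(G)+\cp(\overline{G})=\binom{n}{2}-\bigl(\mathrm{sav}(G)+\mathrm{sav}(\overline{G})\bigr)$, and the target becomes the \emph{lower} bound $\mathrm{sav}(G)+\mathrm{sav}(\overline{G})\ge\tfrac{11}{50}n^2-o(n^2)$.

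The starting ingredient is that monochromatic triangles already save: $\nu$ edge-disjoint triangles in $H$ give $\mathrm{sav}(H)\ge2\nu$, and a \emph{maximal} edge-disjoint triangle packing leaves a triangle-free --- hence, by Mantel, at most $n^2/4$-edge --- graph, so $\mathrm{sav}(H)\ge\tfrac{2}{3}\bigl(e(H)-n^2/4\bigr)$ whenever the right side is positive. By itself this is far too weak (at the balanced density $e(G)\approx e(\overline{G})\approx n^2/4$ it only yields $\cp(G)+\cp(\overline{G})\le\tfrac{1}{2} n^2+o(n^2)$), so genuinely new input is needed, from two sources. First, larger monochromatic cliques: a $K_r$ saves $\binom{r}{2}-1$, almost all of its edges, so dense monochromatic regions are cheap. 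Second --- and this is the real point --- the density trade-off between the colors, exactly as exploited by the extremal example: when a maximal red triangle packing leaves a large near-bipartite remainder, the complement of that remainder is a near-disjoint union of two cliques and is essentially free to partition in blue; and any edge lying in many monochromatic triangles of one color sits inside a dense monochromatic near-clique that is itself cheap to cover. The natural way to account for all of this simultaneously is to apply Szemer\'edi regularity to $G$, pass to the weighted reduced graph, and set up a linear program in the pair-densities --- the adversary chooses the reduced densities, we choose fractional monochromatic clique packings in each color --- and then prove that the optimum of this program is $\binom{n}{2}-\tfrac{7}{25}n^2$, with the optimizing reduced graph being the one underlying the lower bound in \eqref{upper and lower}.

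The main obstacle is exactly this optimization: showing that the extremal configuration of \eqref{upper and lower} is the \emph{worst} case and extracting the sharp constant $\tfrac{7}{25}$ rather than some value strictly between $\tfrac{7}{25}$ and $\tfrac{13}{30}$. This would seem to require either an explicit dual certificate for the linear program, or a stability argument --- proving that any $G$ with $\cp(G)+\cp(\overline{G})$ within $o(n^2)$ of the maximum is $o(n^2)$-close to that extremal complement-pair, and then analyzing that family by hand. Because the constant $\tfrac{7}{25}$ is fragile, the whole argument stands or falls here; and if the program's optimum turns out to exceed $\binom{n}{2}-\tfrac{7}{25}n^2$, the same analysis instead produces a two-coloring with $\cp(G)+\cp(\overline{G})>\tfrac{7}{25}n^2$, i.e.\ a refutation of Conjecture~\ref{DEPW_conj} accompanied by an improved lower bound --- which, given the results announced in the abstract, is the outcome one should anticipate.
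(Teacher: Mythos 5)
There is a fundamental problem with your proposal: the statement is false, and the paper's contribution here is a \emph{disproof} of Conjecture~\ref{DEPW_conj}, not a proof. Your plan correctly reduces the conjecture to the upper bound $\cp(G)+\cp(\overline{G})\le\frac{7}{25}n^2+o(n^2)$, but that bound does not hold. In Section~\ref{sec:lower_bd} the paper revisits the de Caen--Erd\H{o}s--Pullman--Wormald construction $H_\ell(G)$ (a graph $G$ on $m$ vertices joined to two independent sets of size $\ell$, each attached to a clique of size $\ell$, with the two cliques joined to each other), computes $\cp$ of the two pieces exactly via Lemma~\ref{chrom} (edge colorings) and Lemma~\ref{cp(Y)} ($\cp(Y_\ell)\ge\frac{7}{4}\ell^2$), and then re-optimizes the ratio $m/\ell$; the original choice of parameters was suboptimal, and the optimized family gives $\cp(H_\ell(G))+\cp(\overline{H_\ell(G)})\ge\frac{23}{82}n^2+O(n)$, which exceeds $\frac{7}{25}n^2$. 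So the extremal configuration underlying the lower bound in \eqref{upper and lower} is not the worst case, and the step you yourself flagged as the crux --- showing that your density LP optimizes exactly at $\binom{n}{2}-\frac{7}{25}n^2$ on that configuration --- is precisely where the argument fails. Your final sentence anticipates this outcome, which is the right instinct, but as written the proposal cannot yield a proof of the statement.

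Two further points of comparison. First, your refutation route (run the optimization, observe the optimum exceeds the conjectured value) is much less direct than the paper's: one does not need any regularity or LP machinery to disprove the conjecture, only an explicit family with a matching lower-bound computation, and the relevant family was already in \cite{DEPW} waiting to be re-optimized. Second, the upper-bound half of your plan (fractional monochromatic clique packings, transference from fractional to integral) is genuinely close in spirit to what the paper does in Section~\ref{sec:upper_bd} via Theorem~\ref{YHR} and the Keevash--Sudakov extension method; but there the tools are an averaging lemma, Ramsey-type recursions such as Lemma~\ref{4,4 improved}, and computer search on small $n$, rather than Szemer\'edi regularity, and the resulting bound is $0.3186n^2+o(n^2)$ --- still well above $\frac{7}{25}n^2$, consistent with the truth lying in $[\frac{23}{82}n^2,\,0.3186n^2]$.
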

In Section \ref{sec:lower_bd}, we show that the family of graphs constructed in \cite{DEPW} can actually be modified to improve the lower bound in (\ref{upper and lower}), thereby disproving Conjecture \ref{DEPW_conj}.
\begin{thm}\label{thm_lower}
For infinitely many $n$, there exists a self-complementary graph $G \in \mathcal{G}_n$ with $\emph{\cp}(G) \geq \frac{23}{164}n^2 + o(n^2)$.
\end{thm}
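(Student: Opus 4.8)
The plan is to revisit the construction behind the lower bound $\frac{7}{25}n^2$ in (\ref{upper and lower}) and improve it. The one elementary fact needed is that if an edge $uv$ of $G$ lies in no triangle, then $\{u,v\}$ is forced to be one of the cliques in every clique partition of $G$; consequently $\cp(G)$ is at least the number $T(G)$ of triangle-free edges of $G$. So it suffices to exhibit, for infinitely many $n$, a self-complementary graph $G$ on $n$ vertices with $T(G)\ge(\frac{23}{164}-o(1))n^2$: then $\cp(G)\ge\frac{23}{164}n^2+o(n^2)$ directly, so (using also $\cp(G)+\cp(\overline G)=2\cp(G)\ge\frac{23}{82}n^2>\frac{7}{25}n^2$, since a triangle-free edge of $\overline G$ is exactly a non-edge $uv$ of $G$ with $\overline N(u)\cap\overline N(v)=\emptyset$) Conjecture \ref{DEPW_conj} is disproved.

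For the construction I would take a self-complementary "blow-up with recursion": fix a small self-complementary template $T_0$, prescribe adjacency between blocks according to $T_0$, let the $k$ block sizes be $c_1 n,\dots,c_k n$, and recurse the same construction inside each block. Substituting a self-complementary graph into a self-complementary one yields a self-complementary graph, since $\overline{T_0[H]}=\overline{T_0}[\overline H]$; so the whole object is self-complementary provided the recursion bottoms out at self-complementary pieces (e.g.\ single vertices, after rounding the $c_i n$ suitably — this is what restricts us to infinitely many, rather than all, $n$). For a good template, almost every pair of vertices in distinct blocks spans a triangle-free edge of $G$ or a co-triangle-free non-edge of $G$ — the only obstruction is a common neighbor, which one avoids by choosing the between-block pattern of $T_0$ appropriately — so $T(G)=(g(c_1,\dots,c_k)+o(1))n^2$ for an explicit quadratic $g$ determined by $T_0$ together with a recursive term $\sum_i c_i^2\cdot(\text{value inside block }i)$. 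The pairs not handled this way, together with any edges inside blocks whose local clique number grows (Ramsey's theorem forces a few such), number only $o(n^2)$, so they affect neither $T(G)$ nor the self-complementing isomorphism to leading order.

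De Caen et al.\ optimize a version of this scheme for a specific $T_0$ and land on $\frac{7}{25}$; the improvement comes from enlarging the template — in effect, splitting the block of $T_0$ responsible for the slack (the one whose internal pairs are not yet triangle-free) and re-introducing the pattern one level down — and re-solving the resulting finite-dimensional optimization of $g$ over the simplex $\sum_i c_i=1$. I expect the maximizer to be a rational point producing the value $\frac{23}{164}$. The main obstacle, and the only genuinely new work, is identifying the right enlarged template and carrying out this optimization cleanly enough to certify the exact constant (this is where the denominator $164$ must originate); checking self-complementarity of the modified family, verifying that the adjacency pattern is consistent, and summing the geometric recursion are routine.
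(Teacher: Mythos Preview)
Your proposal rests on the inequality $\cp(G)\ge T(G)$, where $T(G)$ counts the edges of $G$ lying in no triangle, and on the hope that a recursive self-complementary blow-up can be tuned so that $T(G)\ge(\tfrac{23}{164}-o(1))n^2$. The inequality is fine, but the rest does not go through, and the paper's argument is genuinely different.

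First, the paper's construction has almost no triangle-free edges. Its graph $H_\ell(G)$ is built from a ``cap'' $G$ on $m$ vertices joined completely to $\overline{K_{2\ell}}$, sitting atop $Y_\ell=\overline{K_\ell}\equiv K_\ell\equiv K_\ell\equiv\overline{K_\ell}$. Every edge inside $Y_\ell$ lies in a triangle (each $K_\ell$ supplies one), and an edge from $g\in V(G)$ to $\overline{K_{2\ell}}$ lies in a triangle through any $G$-neighbour of $g$. So $T(H_\ell(G))=o(n^2)$ for any nondegenerate choice of $G$, yet $\cp(H_\ell(G))+\cp(\overline{H_\ell(G)})\ge \tfrac{23}{82}n^2$. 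The lower bound comes not from triangle-free edges but from two structural lemmas: Lemma~\ref{chrom}, which says $\cp(G\equiv\overline{K_\ell})\ge n\ell-e(G)$ by a counting argument over cliques meeting the independent side, and Lemma~\ref{cp(Y)}, which computes $\cp(Y_\ell)\ge\tfrac{7}{4}\ell^2+O(\ell)$ by analysing how cliques can straddle the two copies of $K_\ell$. The improvement over de Caen--Erd\H{o}s--Pullmann--Wormald is then a one-variable optimisation: the original has $m=\ell$ (giving $\tfrac{7}{25}$), and letting $m=\tfrac{9}{8}\ell$ instead yields $\tfrac{23}{82}$.

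Second, your blow-up scheme is unlikely to produce many triangle-free edges at all. If $u$ lies in block $A$ and has any neighbour inside $A$, then every between-block edge $uv$ (with $v$ in a block $B$ adjacent to $A$) lies in a triangle through that neighbour. So ``most between-block pairs are triangle-free'' forces the within-block graphs to be essentially empty, which is incompatible with self-complementarity of the whole blow-up (the complement then has complete blocks). Your recursion does not escape this: once you recurse, vertices acquire within-block neighbours and the between-block edges stop being triangle-free. The expectation that the optimisation ``lands on $\tfrac{23}{164}$'' is unsupported; that denominator comes specifically from $(m+4\ell)^2$ at $m=\tfrac{9}{8}\ell$, not from any template blow-up.

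In short, the missing idea is that one must bound $\cp$ by something stronger than the triangle-free edge count. The paper does this via Lemmas~\ref{chrom} and~\ref{cp(Y)}; your proposal does not, and the constructions you sketch cannot reach the stated constant by the method you describe.
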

The upper bound in (\ref{upper and lower}) essentially comes from greedily selecting edge-disjoint triangles from $G$ and $\overline{G}$, forming clique partitions into $K_3$'s and $K_2$'s. Subsequent work on complementary triangle packings, first by Erd\H{o}s et al. \cite{EFGJL} and later by Keevash and Sudakov \cite{KS}, improved significantly upon the greedy packing, with the latter authors showing the existence of a packing with $\frac{n^2}{12.89}$ edge-disjoint triangles. The resulting clique partitions (as observed by Bujtas et al. \cite{BDGT}) contain a total of $0.34481n^2 + o(n^2)$ cliques, improving the $0.4\overline{3}n^2$ upper bound in (\ref{upper and lower}). However, partitions into triangles and edges can never push this bound below $0.3\overline{3}n^2$, as illustrated by $G = K_{n/2, n/2}$. 
In Section \ref{sec:upper_bd}, we extend the ideas of Keevash and Sudakov to the complementary clique partition problem, improving (\ref{upper and lower}) beyond the limits of triangle packings:
\begin{thm}
For all $G \in \mathcal{G}_n$, $\emph{\cp}(G) + \emph{\cp}(\overline{G}) \leq 0.3186n^2 + o(n^2)$.
\end{thm}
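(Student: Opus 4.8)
The plan is to break the $\tfrac13 n^2$ barrier inherent in triangle-only arguments by removing not only triangles but also copies of $K_4$ (and, should it help the final constant, a few slightly larger cliques) from both $G$ and $\overline{G}$. For any graph $H$ and any family $\mathcal{K}$ of pairwise edge-disjoint copies of $K_3$ and $K_4$ in $H$, deleting the edges of the members of $\mathcal{K}$ and declaring every remaining edge a singleton clique produces a clique partition, so
\[
\cp(H)\ \le\ e(H)-\sum_{K\in\mathcal{K}}\bigl(e(K)-1\bigr);
\]
a deleted triangle thus saves $2$ cliques and a deleted $K_4$ saves $5$. Writing $\mathrm{sav}(H)$ for the maximum of $\sum_{K\in\mathcal{K}}(e(K)-1)$ over all such $\mathcal{K}$, and using $e(G)+e(\overline{G})=\binom{n}{2}$, it suffices to prove that
\[
\mathrm{sav}(G)+\mathrm{sav}(\overline{G})\ \ge\ \Bigl(\tfrac12-0.3186\Bigr)n^2-o(n^2)\qquad\text{for every }G\in\mathcal{G}_n .
\]
By the theorem of Haxell and R\"odl that the fractional and integral $H$-packing numbers of an $n$-vertex graph differ by $o(n^2)$, together with its extension to packing the finite family $\{K_3,K_4\}$, it is enough to bound from below the corresponding fractional quantity $\mathrm{sav}^{*}(H)$: the maximum of $2x+5y$ over nonnegative weightings of the triangles of $H$ (with total weight $x$) and the copies of $K_4$ in $H$ (with total weight $y$) in which every edge receives total weight at most $1$.

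To bound $\mathrm{sav}^{*}(G)+\mathrm{sav}^{*}(\overline{G})$ one follows the Keevash--Sudakov scheme. Supersaturation provides the cliques to be packed: Goodman's theorem gives at least $(1-o(1))n^3/24$ monochromatic triangles in the $2$-coloring $E(K_n)=E(G)\cup E(\overline{G})$, and positivity of the Ramsey multiplicity of $K_4$ (which follows from $R(4,4)<\infty$ and supersaturation) gives at least $c_4 n^4$ monochromatic copies of $K_4$ for an explicit constant $c_4>0$; both counts localize, in the sense that deleting any $o(n^2)$ edges destroys only $o(n^3)$ of the monochromatic triangles and $o(n^4)$ of the monochromatic $K_4$'s. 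One then converts clique counts into fractional packings exactly as Keevash and Sudakov do: first delete the $o(n^2)$ edges that lie in abnormally many triangles (respectively, $K_4$'s), so that every surviving edge lies in at most $O(n)$ triangles and $O(n^2)$ copies of $K_4$; then a near-uniform weighting of the surviving monochromatic cliques --- or, better, a weighting tuned to trade triangle-weight off against $K_4$-weight --- turns the clique-count lower bounds into lower bounds on $x$ and $y$ for whichever color contains the relevant cliques. This produces a bound on $\mathrm{sav}^{*}(G)+\mathrm{sav}^{*}(\overline{G})$ in terms of the edge density $p=e(G)/\binom{n}{2}$ and of how the monochromatic triangles and $K_4$'s split between the two colors; optimizing the adversary's choices against our weighting parameters, with the best available value of $c_4$ inserted, yields the constant $0.3186$.

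The main obstacle lies in this last conversion. A fractional clique packing is efficient only if no edge lies in too many of the cliques used, but an adversarial coloring can concentrate its monochromatic triangles and $K_4$'s onto a small set of edges, where the packing is forced to be wasteful; managing this is the heart of the Keevash--Sudakov argument, and it is exactly what the two-level cleaning step --- controlling both the number of edges that lie in many cliques and the distribution of the surviving cliques --- is designed to handle. Incorporating $K_4$'s makes this step harder rather than merely longer: triangles and $K_4$'s now compete for the same unit budget on each edge, so the cleaning and the weighting must be optimized jointly, and the density $p$ that is extremal for the combined problem need not be the one that is extremal when only triangles are used. The final constant is whatever this joint optimization delivers; in particular, any improvement in the known lower bound for the Ramsey multiplicity of $K_4$ would push $0.3186$ down further.
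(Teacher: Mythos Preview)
Your proposal does not actually prove the stated bound, and its core strategy diverges from the paper's in a way that matters for the constant. The paper does \emph{not} obtain $0.3186$ by counting monochromatic cliques via supersaturation or Ramsey multiplicity and then converting counts into packings. Instead it proceeds as follows: it shows (Lemma~\ref{averaging}) that $f_r(n)/\bigl(n(n-1)\bigr)$ is monotone in $n$, where $f_r(n)=\min_G\nu_r(G)+\nu_r(\overline{G})$, so that any single finite computation yields an asymptotic lower bound on $c_r$; it then computes $f_4(n)$ exactly for $n\le 19$ by the Keevash--Sudakov extension algorithm (a directed search over graphs, solving the LP on each), getting $f_4(20)>64.725$ and hence $c_4>0.1703$; finally it feeds this into Ramsey-based recursions such as $f_4(4n)\ge 16f_4(n)+5n-9$ and analogous inequalities for $r=5,6,7$ to push up to $c_7\ge 0.1814$, which gives $\tfrac12-c_7\le 0.3186$. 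The specific number $0.3186$ is an artifact of this computer search plus the recursive bootstrapping, not of any analytic optimization.

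Your route, by contrast, tries to produce fractional $\{K_3,K_4\}$-packings out of clique-count lower bounds. This cannot recover $0.3186$: the best known lower bound on the Ramsey multiplicity of $K_4$ is far too small for the ``$5y$'' term to contribute meaningfully after the count-to-packing conversion, and Goodman's triangle bound alone only feeds into the Keevash--Sudakov triangle analysis, which the paper already notes tops out near $0.3448$ and can never go below $1/3$. More fundamentally, you have not carried out the ``joint optimization'' you describe; the sentence ``with the best available value of $c_4$ inserted, yields the constant $0.3186$'' is an assertion, not a derivation, and in fact no known value of the $K_4$ multiplicity constant makes it true. Also, your summary of Keevash--Sudakov is inaccurate: their triangle-packing constant comes from the same monotonicity-plus-finite-search scheme, not from Goodman's theorem followed by a cleaning step. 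If you want a genuine proof of the theorem as stated, you need the averaging lemma, the finite LP computations (or a citation to them), and the Ramsey recursions.
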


\section[]{A $k$-biclique covering of $K_n$}\label{section:bp}

Our goal in this section is to construct a collection of $(1+o(1))n$ bicliques on a set of $n$ vertices such that all $\binom{n}{2}$ edges belong to exactly $k$ bicliques in the collection. We recall the definition of a \emph{combinatorial design} in the sense of Nisan and Wigderson \cite{Nisan1989} from their classical paper on pseudorandom generators.

\begin{definition}
A family of sets $\{S_1,\dots,S_n\}$ with $S_1,\dots,S_n \subseteq [d]$ is a $(n,d,t,m)$-design if:
\begin{enumerate}
\item $|S_i| = m$ for all $i \in [n]$;
\item $|S_i \cap S_j| \leq t$ for all $i,j \in [n]$ with $i \neq j$.
\end{enumerate}
\end{definition}

We construct our designs in a way that differs from \cite{Nisan1989} and better suits our particular choice of parameters:

\begin{lem}\label{lemma:design}
For any positive integers $m$ and $t$, there exists some $N$ such that an $(n,d,t,m)$-design with $d \leq 2mn^{1/(t+1)}$ exists for all $n \geq N$.
\end{lem}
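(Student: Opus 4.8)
The plan is to build the design as the family of graphs of low-degree polynomials over a finite field --- the standard Reed--Solomon / Nisan--Wigderson trick --- but with the field size chosen so that $d$ is essentially optimal for the given $n$. We may assume $m \geq t+1$, since for $m \leq t$ the condition $|S_i \cap S_j| \leq t$ is vacuous. Given $m$ and $t$, set $x := n^{1/(t+1)}$ and use Bertrand's postulate (valid once $n$ is large) to pick a prime $q$ with $x \leq q \leq 2x$; this ``snapping'' of $q$ to the least admissible prime is exactly where the factor $2$ in $d \leq 2m n^{1/(t+1)}$ comes from. Choose $N$ (of order $m^{t+1}$) large enough that $n \geq N$ forces $q \geq m$, and let the ground set be $[d] := [m] \times \F_q$, so that $d = mq \leq 2m n^{1/(t+1)}$ as required.

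Next, fix an injection of $[m]$ into $\F_q$ (legal since $q \geq m$), and for each polynomial $p \in \F_q[z]$ with $\deg p \leq t$ put $S_p := \{\,(j, p(j)) : j \in [m]\,\} \subseteq [m] \times \F_q$. Each $S_p$ has exactly $m$ elements because its elements have distinct first coordinates. If $p \neq p'$ are two such polynomials, then $p - p'$ is a nonzero polynomial of degree at most $t$ and so vanishes at at most $t$ points of $\F_q$; hence $p(j) = p'(j)$ for at most $t$ values of $j$, i.e.\ $|S_p \cap S_{p'}| \leq t$. Finally, since $m \geq t+1$, no nonzero polynomial of degree $\leq t$ can vanish at all $m$ chosen evaluation points, so $p \mapsto S_p$ is injective; this yields $q^{t+1} \geq (n^{1/(t+1)})^{t+1} = n$ distinct sets, and keeping any $n$ of them gives the desired $(n,d,t,m)$-design.

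I expect the only genuine obstacle to be the mismatch between an arbitrary $n$ and the ``grid'' $q^{t+1}$ of sizes this construction naturally produces, combined with the insistence that $d$ stay within a constant factor of the information-theoretic lower bound $d = \Omega(m n^{1/(t+1)})$ --- the latter itself following from the observation that every $(t+1)$-subset of $[d]$ lies in at most one $S_i$, so that $n \binom{m}{t+1} \leq \binom{d}{t+1}$. Bertrand's postulate dissolves exactly this obstacle at the cost of the constant $2$; the ``sufficiently large $n$'' in the statement is needed only to guarantee $q \geq m$, i.e.\ that $\F_q$ is large enough to host $m$ evaluation points, and everything else is the routine polynomial-code argument.
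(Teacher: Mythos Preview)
Your proof is correct but takes a genuinely different route from the paper. You use the classical Reed--Solomon / Nisan--Wigderson construction: one prime $q$ in $[n^{1/(t+1)}, 2n^{1/(t+1)}]$ via Bertrand, ground set $[m]\times\F_q$, and sets given by graphs of degree-$\leq t$ polynomials. The paper instead builds a CRT-style design: it selects $m$ \emph{distinct} primes $p_1,\dots,p_m$ in the interval $[n^{1/(t+1)}, 2n^{1/(t+1)}]$ (which is why it invokes the Prime Number Theorem rather than Bertrand), takes the universe to be $\bigsqcup_k \Z/p_k\Z$, and puts $S_i = (i \bmod p_1,\dots,i\bmod p_m)$; if $|S_i\cap S_j|\geq t+1$ then $t+1$ of the primes divide $i-j$, forcing $|i-j|\geq n$. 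Your argument is the more elementary one (a single prime, Bertrand instead of PNT). What the paper's version buys is extra structure that is exploited downstream in Theorem~\ref{thm:bicliqueconstruction}: in the case $t=1$ actually used, the pairs $\{i,j\}$ with $|S_i\cap S_j|=1$ are exactly those with $i\equiv j\pmod{p_l}$ for a unique $l$, and hence partition into cliques $C_{l,r}=\{i: i\equiv r\pmod{p_l}\}$ of size at most $\sqrt{n}$, which is precisely what the $\{1,2\}$-covering step there needs. (Your polynomial construction has an analogous ``pencil-through-a-point'' clique decomposition, so it could be adapted, but that is not what the paper does.) One tiny quibble: Bertrand's postulate holds for all $x\geq 1$, not just large $x$; the only place you genuinely need $n$ large is to ensure $q\geq m$.
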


\begin{proof}
Let $N$ be large enough that there are at least $m$ prime numbers in the interval $[n^{1/(t+1)}, 2n^{1/(t+1)}]$ for every $n \geq N$; this is possible by the Prime Number Theorem. Fix some $n \geq N$, and choose $m$ distinct primes $p_1,\dots,p_m$ in this interval. We will pick sets $S_1,\dots,S_n$ from the disjoint union $$U = \bigsqcup_{k=1}^m \mathbb{Z}/p_k \mathbb{Z}.$$ For $i \in [n]$, let $S_i$ consist of $m$ elements from $U$, one from each group. Specifically, for $k \in [m]$, pick element $i \pmod{p_k}$ from group $\mathbb{Z}/p_k\mathbb{Z}$.

It is clear that $|S_i| = m$ for all $i$, and that $d := |U| = \sum_{k=1}^m p_k \leq 2mn^{1/(t+1)}$. We claim that $|S_i \cap S_j| \leq t$ for all distinct $i, j \in [n]$. Indeed, suppose to the contrary that $|S_i \cap S_j| > t$ for some distinct $i,j \in [n]$. Then among the chosen primes, there are $t+1$ primes $p_{l_1},\dots,p_{l_{t+1}}$ with $i \equiv j \pmod{p_{l_k}}$ for each $k \in [t+1]$. But then $$\prod_{k=1}^{t+1} p_{l_k}  \bigm | (i-j).$$ Since $i \neq j$, it follows that $$|i-j| \geq \prod_{k=1}^{t+1} p_{l_k} \geq n,$$ a contradiction.
\end{proof}

\begin{remark}
The above design is in fact optimal up to constant factors. Consider any $(n,d,t,m)$-design, where the sets are contained in a universe $U$ of size $d$. For every $(t+1)$-element subset of $U$, there is at most one set among $S_1,\dots,S_n$ that contains the subset. Since each $S_i$ contains $\binom{m}{t+1}$ subsets of size $t+1$, we must have $\binom{d}{t+1} \geq n \binom{m}{t+1},$ so $$d \geq \left (n \binom{m}{t+1} (t+1)!\right)^{1/(t+1)}  \geq \frac{1}{e} n^{1/(t+1)} m.$$
\end{remark}

We will only use the special case $(n, k\sqrt{n}, 1, \lfloor k/2 \rfloor)$ of Lemma~\ref{lemma:design}, which we state explicitly below as a corollary.

\begin{cor}\label{cor:design} For any positive integer $k$ and all $n$ sufficiently large, there is some $d \leq k\sqrt{n}$ and sets $S_1,\dots,S_n \subseteq [d]$ with $|S_i| = \lfloor k/2 \rfloor$ for all $i$, and $|S_i \cap S_j| \leq 1$ for all $i \neq j$. \\
\end{cor}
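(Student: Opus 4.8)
The plan is to simply instantiate Lemma~\ref{lemma:design} with the parameters $t = 1$ and $m = \lfloor k/2 \rfloor$. Since both $t$ and $m$ depend only on $k$, the lemma supplies a threshold $N = N(k)$ such that for every $n \geq N$ there exists an $(n, d, 1, \lfloor k/2\rfloor)$-design with
\[
d \leq 2\lfloor k/2\rfloor\, n^{1/(t+1)} = 2\lfloor k/2\rfloor\, \sqrt{n}.
\]
By the definition of a $(n,d,t,m)$-design, such an object is exactly a family $S_1,\dots,S_n \subseteq [d]$ with $|S_i| = \lfloor k/2\rfloor$ for all $i$ and $|S_i \cap S_j| \leq 1$ for all distinct $i,j$, which is precisely the content of the corollary.

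The only point needing a word of justification is the cleaner bound $d \leq k\sqrt{n}$ claimed in the statement: this holds because $2\lfloor k/2\rfloor$ equals $k$ when $k$ is even and $k-1$ when $k$ is odd, so in all cases $2\lfloor k/2\rfloor \leq k$, and hence $d \leq 2\lfloor k/2\rfloor\sqrt{n} \leq k\sqrt{n}$. One should also note that the threshold $N$ is permitted to depend on $k$, which is consistent with the phrase ``for all $n$ sufficiently large'' in the statement.

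Accordingly, there is no genuine obstacle here: the corollary is a direct specialization of Lemma~\ref{lemma:design}, and the entire combinatorial content — the construction of the sets from primes lying in $[n^{1/(t+1)}, 2n^{1/(t+1)}]$, guaranteed by the Prime Number Theorem, together with the divisibility argument showing $\prod_{k=1}^{t+1} p_{l_k} \mid (i-j)$ forces $|i-j| \geq n$ — has already been carried out in the proof of the lemma. The proof of the corollary is therefore a single line: apply Lemma~\ref{lemma:design} with $t=1$, $m=\lfloor k/2\rfloor$, and observe $2\lfloor k/2\rfloor \le k$.
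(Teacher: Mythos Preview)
Your proposal is correct and matches the paper's approach exactly: the corollary is stated there as the special case $t=1$, $m=\lfloor k/2\rfloor$ of Lemma~\ref{lemma:design}, with no separate proof given. Your added observation that $2\lfloor k/2\rfloor \le k$ cleanly justifies the bound $d \le k\sqrt{n}$.
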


We also require a result of Alon \cite{Alon1997} on $\{1,2\}$-biclique coverings of $K_n$, which are collections of bicliques such that every edge of $K_n$ belongs to either 1 or 2 of the bicliques in the collection. The size of the smallest such collection is denoted $\bp_{\{1,2\}}(K_n)$.

\begin{fact}
[Alon, \cite{Alon1997}]\label{Alon} For all $n$, $\emph{\bp}_{\{1,2\}}(K_n) \leq 2\sqrt{n}$.
\end{fact}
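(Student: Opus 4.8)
The plan is to write down an explicit $\{1,2\}$-biclique cover of $K_n$ of size at most $2\sqrt{n}$ via a grid construction together with a ``triangular ordering'' trick. Set $m = \lceil \sqrt{n} \rceil$ and embed the vertex set of $K_n$ arbitrarily into the grid $[m] \times [m]$; call the first coordinate of a vertex its \emph{column} and the second its \emph{row}. For each $i \in [m-1]$ let $A_i$ be the complete bipartite graph between (the vertices in) column $i$ and (the vertices in) columns $i+1, \dots, m$, and for each $j \in [m-1]$ let $B_j$ be the complete bipartite graph between row $j$ and rows $j+1, \dots, m$. After discarding any of these with an empty side, we obtain a collection of at most $2(m-1) \le 2\sqrt{n}$ bicliques, each a (genuine) subgraph of $K_n$.

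It remains to verify that every edge of $K_n$ lies in either $1$ or $2$ of the chosen bicliques. The key point is that the family $\{A_i\}_{i}$ behaves like a Graham--Pollak star decomposition of $K_m$ blown up along the columns: an edge joining columns $a \neq c$ is covered exactly once, namely by $A_{\min(a,c)}$, while an edge inside a single column is covered by no $A_i$; symmetrically, $\{B_j\}_j$ covers each edge between two distinct rows exactly once and each edge inside a row zero times. Hence, for an edge with endpoints in columns $a, c$ and rows $y, z$: if $a \neq c$ and $y \neq z$ it is covered exactly twice (once by each family); if exactly one of the equalities $a = c$, $y = z$ holds it is covered exactly once; and $a = c$ together with $y = z$ is impossible for an edge. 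Combined with the size bound, this gives $\bp_{\{1,2\}}(K_n) \le 2\sqrt n$.

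This argument is short, so I do not anticipate a real obstacle; the one place requiring care is the precise choice of the biclique families. If one instead took $A_i$ to join column $i$ to \emph{all} other columns, each inter-column edge would be covered twice, and then together with the analogous row family a generic edge would be covered four times, violating the $\{1,2\}$ constraint. Using the nested ``later columns'' sets $\{i+1, \dots, m\}$ in place of $[m]\setminus\{i\}$ is exactly the device that forces the inter-column (and, symmetrically, inter-row) coverage each to equal $1$, keeping the total in $\{1,2\}$. A final routine check confirms $2(\lceil\sqrt n\rceil - 1) \le 2\sqrt n$ for every $n$, whether or not $n$ is a perfect square.
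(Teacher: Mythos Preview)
The paper does not actually prove this statement; it is quoted as a Fact with a citation to Alon~\cite{Alon1997}, so there is nothing in the paper to compare against. Your argument is correct and is essentially the standard grid construction behind Alon's bound: place the vertices in an $m\times m$ array with $m=\lceil\sqrt n\rceil$, use the $m-1$ ``column vs.\ later columns'' bicliques and the $m-1$ ``row vs.\ later rows'' bicliques, and observe that an edge is covered once by each family in which its endpoints lie in different classes, giving total multiplicity $1$ or $2$. Your handling of the boundary issues (discarding bicliques with an empty side, and the inequality $2(\lceil\sqrt n\rceil-1)\le 2\sqrt n$) is also fine.
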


Finally, we construct a $k$-biclique covering of $K_n$.

\begin{thm}\label{thm:bicliqueconstruction}
Let $k$ be a positive integer. Then for all sufficiently large $n$, $$\emph{\bp}_{k}(K_n) \leq n  + 2kn^{3/4} + k\sqrt{n}.$$
\end{thm}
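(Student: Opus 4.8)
The plan is to combine the design from Corollary~\ref{cor:design} with the Graham--Pollak star decomposition and Alon's $\{1,2\}$-covering (Fact~\ref{Alon}) to build the $k$-cover in ``layers.'' The rough idea: a standard biclique partition of $K_n$ (e.g. the $n-1$ stars) covers every edge exactly once, contributing $\approx n$ bicliques; if we could stack $k$ such partitions we would use $\approx kn$ bicliques, which is too many. Instead, we want most of the $k$ coverings of each edge to be ``shared'' — produced by a single clever collection of bicliques rather than $k$ separate ones. This is exactly what a design buys us: using the sets $S_1,\dots,S_n \subseteq [d]$ with $d \le k\sqrt n$, $|S_i| = \lfloor k/2\rfloor$, $|S_i \cap S_j| \le 1$, we assign to each vertex $i$ a set of ``colors'' $S_i$, and for each color $c \in [d]$ we form the biclique(s) on the vertex class $\{i : c \in S_i\}$. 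Since any two vertices $i,j$ share at most one color, the edge $ij$ gets covered by the color classes at most once; and it gets covered exactly $|S_i \cap S_j| \in \{0,1\}$ times. So one design-based family of $\le d \le k\sqrt n$ bicliques covers \emph{some} edges once and the rest zero times — we need to do bookkeeping to see which edges are still ``owed'' coverings.

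First I would set up the counting precisely. Think of building the $k$-cover as a sum of $k$ ``unit covers'': we want each edge covered exactly $k$ times total. Split $k = 2\lfloor k/2\rfloor + (k \bmod 2)$ if $k$ is odd, i.e. handle the parity separately. For the bulk, I want to realize $2\lfloor k/2 \rfloor$ coverings using the design. Here is the cleaner way to see it: within each color class $V_c = \{i : c \in S_i\}$, we don't just take $K_{V_c}$ as one biclique — we take a \emph{Graham--Pollak star decomposition of the complete graph on $V_c$}, giving $|V_c| - 1$ bicliques whose union covers each edge inside $V_c$ exactly once. Summing over all colors $c \in [d]$, an edge $ij$ is covered exactly $|S_i \cap S_j|$ times, which is $0$ or $1$. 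The total number of bicliques used is $\sum_{c} (|V_c| - 1) \le \sum_c |V_c| = \sum_i |S_i| = n\lfloor k/2\rfloor$. That's still $\Theta(kn)$, so this naive version is no good — the trick from Nisan--Wigderson-style constructions must be to re-use the \emph{same} biclique at many ``levels.'' Let me reconsider: the point of $|S_i| = \lfloor k/2\rfloor$ is that vertex $i$ participates in $\lfloor k/2 \rfloor$ color classes, so within those classes it can accumulate coverings; we want the total coverage of edge $ij$ to hit $k$. Since $|S_i \cap S_j| \le 1$, the design alone gives at most $1$ — so the design must be used to handle the ``deficiency'' differently. I think the right decomposition is: iterate the construction, where at step $r$ the color classes themselves get recursively covered, so that an edge inside a common color class gets covered once by that level plus whatever the recursion on $V_c$ provides; with $|V_c| \approx n / \sqrt n = \sqrt n$ and only $\lfloor k/2 \rfloor$ levels of this, the class sizes shrink geometrically in a controlled way. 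The bound $n + 2kn^{3/4} + k\sqrt n$ suggests exactly one level of recursion: the top level uses $\approx n$ bicliques (a Graham--Pollak decomposition of $K_n$ itself, covering each edge once), then for the remaining $k-1$ coverings we use the design on color classes of size $\approx \sqrt n$, each of which we cover $k-1 \le k$ times recursively — but $\bp_{k-1}$ of a $\sqrt n$-vertex graph is $O(k \sqrt n)$, and there are $\approx \sqrt n$ classes... giving $O(k n)$ again.

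Let me commit to the structure I believe is intended, reading off the error term $2kn^{3/4} + k\sqrt n$. Take the $\{1,2\}$-cover of Alon (Fact~\ref{Alon}) as the basic tool for handling ``almost everything once, occasionally twice.'' The construction: pick the design $S_1,\dots,S_n \subseteq [d]$, $d \le k\sqrt n$, $|S_i| = \lfloor k/2\rfloor$, pairwise intersections $\le 1$. For each color $c$, the class $V_c$ has some size $n_c$; apply Fact~\ref{Alon} to get a $\{1,2\}$-cover of $K_{V_c}$ of size $\le 2\sqrt{n_c}$. Summing, $\sum_c 2\sqrt{n_c} \le 2\sqrt{d}\,\sqrt{\sum_c n_c} = 2\sqrt{d}\sqrt{n\lfloor k/2\rfloor} = O(\sqrt{k\sqrt n}\cdot \sqrt{kn}) = O(k\, n^{3/4})$ by Cauchy--Schwarz — this matches the $2kn^{3/4}$ term. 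Now the key claim is that each edge $ij$ is covered, across all color classes, either $\lfloor k/2\rfloor$ or $\lfloor k/2 \rfloor + $(something small) times: wait, this also isn't right because if $S_i \cap S_j = \emptyset$ then $ij$ is in no color class at all. So the design must be a \emph{resolvable}-type design or we need a different covering guarantee — we need every pair $i,j$ to lie in \emph{many} common classes, which contradicts the pairwise-intersection-$\le 1$ bound. So the design's role must be the opposite: the design ensures no pair is \emph{over}-covered, and a separate ``base'' collection handles the bulk coverage. I would therefore: (1) take $\lfloor k/2 \rfloor$ edge-disjoint copies of Graham--Pollak-type structures, or rather take one big Graham--Pollak decomposition and realize $k$-fold coverage by partitioning $[n]$ in $\lfloor k/2\rfloor$ different ways along the design coordinates and within each part taking a $\{1,2\}$-cover; (2) carefully track, using $|S_i \cap S_j| \le 1$ and $|S_i| = \lfloor k/2\rfloor$, that each edge ends up covered between $k-O(1)$ and $k+O(1)$ times; (3) patch the $O(1)$ discrepancy per edge using $O(1)$ more full covers of $K_n$, costing $O(n)$, and absorb a global $\{1,2\}$-cover of size $2\sqrt n$ (the $k\sqrt n$ term) to fix parity.

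The main obstacle, and the part I expect to require real care, is exactly this coverage bookkeeping: showing that the design-indexed family of within-class $\{1,2\}$-covers, together with a small number of global covers, hits \emph{every} edge \emph{exactly} $k$ times — not $k-1$, not $k+1$. The $\{1,2\}$-covers introduce $\pm 1$ slop per class, and an edge $ij$ sits in $|S_i \cap S_j|$ classes where $|S_i \cap S_j| \in \{0,1\}$; reconciling ``each vertex is in $\lfloor k/2\rfloor$ classes'' with ``each edge is in $\le 1$ class'' means the within-class covers cannot be the bulk mechanism, so I'd need the bulk to come from a genuinely $k$-uniform base object (stacked Graham--Pollak partitions give exactly $k$ but cost $kn$; so the base must instead be something like: one Graham--Pollak partition of $K_n$ covering each edge once, used $k$ times but with $(k-1)$ of those ``virtual'' copies redirected through the design to save bicliques). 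Honestly, pinning down the exact mechanism that simultaneously (a) uses only $n + o(kn)\cdot$[small] bicliques and (b) achieves \emph{exact} uniform multiplicity $k$ is the crux; I would structure the proof around a weighting/telescoping identity that writes the all-ones ``$k$ times'' requirement as a signed combination of complete-graph covers on the nested classes $V_c$, with the design condition $|S_i\cap S_j|\le1$ ensuring the signs don't produce negative multiplicities, and then verify the total biclique count telescopes to $n + 2kn^{3/4} + k\sqrt n$ via the Cauchy--Schwarz estimate above for the $n^{3/4}$ term and Fact~\ref{Alon} directly for the $\sqrt n$ term.
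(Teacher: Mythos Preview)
Your proposal circles the right ingredients but never lands on the key construction, and the final ``weighting/telescoping'' paragraph is not a proof. The gap is concrete: throughout, you take the biclique associated to a color $c \in [d]$ to live \emph{inside} the class $V_c = \{i : c \in S_i\}$, so that edge $\{i,j\}$ is covered by the color-$c$ family only when $c \in S_i \cap S_j$. As you correctly observe, this makes the total coverage of $\{i,j\}$ equal to $|S_i \cap S_j| \in \{0,1\}$, which is hopeless as a bulk mechanism. The paper instead takes $B_c$ to be the \emph{cut} biclique between $V_c$ and its complement $[n]\setminus V_c$. Then $\{i,j\}$ is covered by $B_c$ exactly when $c \in S_i \triangle S_j$, so the $d$ bicliques $B_1,\dots,B_d$ together cover $\{i,j\}$ exactly
\[
|S_i \triangle S_j| \;=\; |S_i| + |S_j| - 2|S_i\cap S_j| \;\in\; \{\,2\lfloor k/2\rfloor,\ 2\lfloor k/2\rfloor - 2\,\}
\]
times, using only $d \le k\sqrt n$ bicliques. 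This single move is what turns the design into a near-$k$-cover at cost $O(k\sqrt n)$ rather than $\Theta(kn)$; your attempts all miss it.

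Once you have this, the rest is short bookkeeping, and the roles of the $2kn^{3/4}$ and $n$ terms are different from what you guessed. For odd $k$ every edge is now short by $1$ or $3$; the edges short by $3$ are precisely those with $|S_i\cap S_j|=1$, and the explicit prime-based design shows these edges decompose into at most $d \le k\sqrt n$ cliques $C_{l,r}$ (one per residue class mod each prime), each of size at most $\sqrt n$. Applying Fact~\ref{Alon} to each such clique costs at most $k\sqrt n \cdot 2n^{1/4} = 2kn^{3/4}$ bicliques and leaves every edge short by $1$ or $2$. (For even $k$ skip this step; edges are short by $0$ or $2$.) Finally, the deficit is fixed exactly by $n$ stars: let $D_i$ be the star at $i$ whose edges go to those $j<i$ still short by $1$ or $2$ and those $j>i$ still short by $2$. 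Each edge $\{i,j\}$ with $i<j$ is then picked up by $D_j$ always and by $D_i$ only if it was short by $2$, so the total hits $k$ exactly. The $n$ in the bound comes from this padding step, not from a Graham--Pollak decomposition of $K_n$.
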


\begin{proof}
Let $n$ be large enough to apply Corollary \ref{cor:design}. Let $S_1, \dots, S_n \subseteq [d]$ be the sets in the resulting design, and let $p_1, \dots, p_{\lfloor k/2\rfloor}$ be the corresponding primes used in the proof of Lemma \ref{lemma:design}. Define bicliques $B_1,\dots,B_d \subset K_n$ by letting $B_i$ be the biclique between $\{j \in [n] \mid i \in S_j\}$ and $\{j \in [n] \mid i \not \in S_j\}$. Then any edge $\{i,j\}$ is covered exactly $|S_i| + |S_j| - 2|S_i \cap S_j|$ times, and this number is equal to either $2\lfloor k/2 \rfloor$ or $2 \lfloor k/2 \rfloor - 2$ (depending on whether $|S_i \cap S_j| = 0$ or 1).

If $k$ is odd, every edge still needs to be covered either $1$ or $3$ more times. Let us define a \emph{triple-edge} to be an edge $\{i,j\}$ with $|S_i \cap S_j| = 1$. An edge $\{i,j\}$ is a triple-edge if and only if there exists some index $l$ and remainder $r$ such that $i \equiv j \equiv r \pmod{p_l}$. We can define a clique $C_{l,r}$ consisting of all vertices $i$ with $i \equiv r \pmod{p_l}$. Observe that every triple-edge is contained in exactly one such clique, and every such clique contains only triple-edges. To make progress, we will construct a $\{1,2\}$-biclique covering of each clique $C_{l ,r}$. The number of cliques $C_{l,r}$ is at most $k\sqrt{n}$, and each has size at most $\sqrt{n}$, so by Fact \ref{Alon}, at most $k\sqrt{n} \cdot 2n^{1/4} = 2kn^{3/4}$ bicliques are needed to $\{1,2\}$-cover every clique $C_{l,r}$. Now every edge needs to be covered $1$ or $2$ more times.

If $k$ is even, every edge needs to be covered only $0$ or $2$ more times, so we skip the above step. Finally, in either case, we'll ``pad'' the covering so that every edge is covered exactly $k$ times. To do this, define bicliques $D_1,\dots,D_n$ where $D_i$ is the star centered at vertex $i$ and containing edges to all vertices $j < i$ such that $\{i,j\}$ needs to be covered $1$ or $2$ more times, and to all vertices $j > i$ such that $\{i,j\}$ needs to be covered $2$ more times. 

This completes the construction. The total number of bicliques used is at most $n  + 2kn^{3/4} + k\sqrt{n}$ (from the padding step, the $\{1,2\}$-covering step, and the initial design, respectively). 
\end{proof}

\begin{remark} A key ingredient in the proof above is the $\{2k-2, 2k\}$-biclique covering of $K_n$ using $2k\sqrt{n}$ bicliques. It is shown in \cite{Cioaba2013} that $\sqrt{n/2}$ bicliques are necessary for this list covering, so the asymptotic dependence on $n$ cannot be decreased. \\
\end{remark}

\newpage

\section{Clique partitions of a graph and its complement}

\subsection{Improving the lower bound}\label{sec:lower_bd}

The construction in our proof of Theorem \ref{thm_lower} is based on the original construction in \cite{DEPW}, and the calculation of its clique partition number makes use of certain facts shown in \cite{DEPW} and \cite{PD}. Here we include the entire argument for the reader's convenience. Before proceeding with the construction, we need the following lemma, which has appeared in many places but perhaps first in Pullman and Donald \cite{PD}. Recall that the edge chromatic number $\chi'(G)$ of a graph $G$ is the minimum number of colors needed to color the edges of $G$ so that no two edges of the same color are incident to the same vertex. We use the notation $G \equiv H$ to denote the graph on vertices $V(G) \sqcup V(H)$ formed by adding all edges between $V(G)$ and $V(H)$.

\begin{lem}\label{chrom}
Let $G$ be any graph with $n$ vertices and $e$ edges. Then $\emph{\cp}(G \equiv \overline{K_\ell}) \geq n\ell - e$. If $\chi'(G) \leq \ell$, then $\emph{\cp}(G \equiv \overline{K_\ell}) = n\ell - e$. 
\end{lem}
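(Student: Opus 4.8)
The plan is to prove the lower bound $\cp(G \equiv \overline{K_\ell}) \geq n\ell - e$ by a counting argument on how cliques interact with the independent set $\overline{K_\ell}$, and then to produce a matching clique partition of size exactly $n\ell - e$ when $\chi'(G) \leq \ell$ by an explicit construction based on a proper edge coloring.

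For the lower bound, write $H = G \equiv \overline{K_\ell}$, let $A = V(G)$ (so $|A| = n$) and $B = V(\overline{K_\ell})$ (so $|B| = \ell$), and note that $B$ is an independent set while every vertex of $A$ is joined to every vertex of $B$. The key observation is that any clique $K$ of $H$ contains at most one vertex of $B$. Consider a clique partition $\mathcal{C}$ of $H$. The $n\ell$ edges between $A$ and $B$ must each lie in exactly one clique of $\mathcal{C}$; a clique $K \in \mathcal{C}$ that contains a vertex $b \in B$ contributes $|K \cap A|$ such cross-edges, and a clique disjoint from $B$ contributes none. Meanwhile, the $e$ edges of $G$ (i.e. edges inside $A$) must also be covered. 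I would count pairs more carefully: for a clique $K$ with $|K \cap A| = a_K$ and $|K \cap B| \in \{0,1\}$, it covers $a_K$ cross-edges if it meets $B$ and $\binom{a_K}{2}$ edges inside $A$. Summing, $\sum_{K} a_K \cdot \mathbf{1}[K \cap B \neq \emptyset] = n\ell$ and $\sum_K \binom{a_K}{2} \geq e$. The cleanest route is: each clique covers at most $a_K$ cross-edges and the number of cliques covering cross-edges incident to a fixed $b \in B$ is at least (number of distinct "$A$-parts" needed to partition the star at $b$ together with whatever $G$-edges they carry). Actually the slick argument is to assign to each clique $K$ the quantity $(\text{number of edges of } H \text{ it covers})$ and observe $|\mathcal{C}| \geq \sum_K 1 \geq \sum_K \big( a_K \mathbf{1}[K\cap B \neq\emptyset] - \binom{a_K}{2}_{\text{inside }A}\big)$ rearranged so telescoping gives $n\ell - e$; I will make this precise by noting that a clique meeting $B$ in one vertex and $A$ in $a$ vertices "saves" at most $a - 1$ over the trivial star decomposition while using up $\binom{a}{2}$ edges of $G$.

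For the upper bound when $\chi'(G) \leq \ell$: fix a proper edge coloring of $G$ with color classes $M_1, \dots, M_\ell$ (matchings), and label the vertices of $B$ as $b_1, \dots, b_\ell$. For each color $i$ and each edge $\{u,v\} \in M_i$, form the triangle $\{u, v, b_i\}$; since $M_i$ is a matching, these triangles share only the vertex $b_i$ and are otherwise edge-disjoint, and they exactly cover all edges of $G$ plus, for each $i$, the cross-edges from $b_i$ to the vertices matched by $M_i$. The remaining uncovered edges of $H$ are exactly the cross-edges $\{b_i, w\}$ where $w \in A$ is not covered by $M_i$; cover each of these by the single edge $K_2 = \{b_i, w\}$. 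Counting: the number of triangles is $\sum_i |M_i| = e$; the number of leftover $K_2$'s is $\sum_i (n - 2|M_i|) = n\ell - 2e$. Total $= e + (n\ell - 2e) = n\ell - e$, matching the lower bound. Combined with the lower bound this gives equality.

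The main obstacle I anticipate is getting the lower-bound counting argument tight — it is easy to lose a lower-order or even a constant factor if one bounds crudely, so the care goes into the exact bookkeeping showing that the savings from using large cliques inside $A \cup \{b\}$ are paid for one-for-one by the $G$-edges consumed, yielding precisely $n\ell - e$ rather than something weaker. The upper bound is routine once the proper edge coloring is invoked.
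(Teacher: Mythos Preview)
Your proposal is correct and follows essentially the same approach as the paper: for the lower bound, the paper makes your ``savings'' intuition precise via the single per-clique inequality $|E(C_i)\cap E(G)| = \binom{a}{2} \ge a-1 = |E(C_i)\cap E_{G-H}|-1$ for each clique $C_i$ meeting $\overline{K_\ell}$ in one vertex and $V(G)$ in $a$ vertices, then sums over all such cliques to get $e \ge n\ell - |\mathcal{C}|$. Your upper-bound construction via a proper edge coloring is identical to the paper's.
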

\begin{proof}
Let $H = \overline{K_\ell}$ and let $E_{G - H}$ be the set of all $n\ell$ edges between $V(G)$ and $V(H)$. Suppose $C_1, \dots, C_r$ is a clique partition of $G \equiv H$. Since $C_i$ can have at most one vertex in $H$, it follows that $|E(C_i) \cap E(G)| \geq \binom{|E(C_i) \cap E_{G-H}| - 1}{2} \geq |E(C_i) \cap E_{G-H}| - 1$. Letting $S = \{i \, : \, E(C_i) \cap E_{G-H} \neq \emptyset\}$ and summing this inequality over $S$, we obtain
\be
e \geq \sum_{i \in S}|E(C_i) \cap E(G)| \geq \sum_{i \in S} |E(C_i) \cap E_{G-H}| - |S| \geq n\ell - r,
\ee
which implies $\cp(G \equiv H) \geq n\ell - e$. When $\chi'(G) \leq \ell$, we can assign each of the $\ell$ nodes in $H$ to one of the $\ell$ color classes of a valid edge coloring in $G$, and obtain a collection of triangles of the form $\{v, x, y\}$, for $v \in H$ and $(x,y) \in E(G)$ that has been given color $v$ in the edge coloring. No edge in $E_{G - H}$ will be used twice precisely because no vertex in $G$ is incident to two edges of the same color. This gives a collection of $e$ edge-disjoint triangles that cover all the edges in $G$, and leaves at most $n\ell - 2e$ edges left to cover. Adding in those remaining edges yields a clique partition of size at most $n\ell - e$.
\end{proof}

\tikzset{every picture/.style={line width=0.6pt}} 
\begin{figure}
    \centering

\begin{tikzpicture}[x=0.6pt,y=0.6pt,yscale=-1,xscale=1]
\draw  [fill={rgb, 255:red, 0; green, 0; blue, 0 }  ,fill opacity=0 ] (108,114) .. controls (108,102.4) and (117.4,93) .. (129,93) .. controls (140.6,93) and (150,102.4) .. (150,114) .. controls (150,125.6) and (140.6,135) .. (129,135) .. controls (117.4,135) and (108,125.6) .. (108,114) -- cycle ;
\draw  [fill={rgb, 255:red, 0; green, 0; blue, 0 }  ,fill opacity=0 ] (246,114) .. controls (246,102.4) and (255.4,93) .. (267,93) .. controls (278.6,93) and (288,102.4) .. (288,114) .. controls (288,125.6) and (278.6,135) .. (267,135) .. controls (255.4,135) and (246,125.6) .. (246,114) -- cycle ;
\draw  [fill={rgb, 255:red, 0; green, 0; blue, 0 }  ,fill opacity=0.58 ] (128,200) .. controls (128,188.4) and (137.4,179) .. (149,179) .. controls (160.6,179) and (170,188.4) .. (170,200) .. controls (170,211.6) and (160.6,221) .. (149,221) .. controls (137.4,221) and (128,211.6) .. (128,200) -- cycle ;
\draw  [fill={rgb, 255:red, 0; green, 0; blue, 0 }  ,fill opacity=0.58 ] (228,200) .. controls (228,188.4) and (237.4,179) .. (249,179) .. controls (260.6,179) and (270,188.4) .. (270,200) .. controls (270,211.6) and (260.6,221) .. (249,221) .. controls (237.4,221) and (228,211.6) .. (228,200) -- cycle ;
\draw   (174,57.5) .. controls (174,42.86) and (185.86,31) .. (200.5,31) .. controls (215.14,31) and (227,42.86) .. (227,57.5) .. controls (227,72.14) and (215.14,84) .. (200.5,84) .. controls (185.86,84) and (174,72.14) .. (174,57.5) -- cycle ;
\draw    (170,200) -- (228,200) ;
\draw    (170,206) -- (227,206) ;
\draw    (129,135) -- (144,180) ;
\draw    (261,134) -- (250,179) ;
\draw    (267,135) -- (256,181) ;
\draw    (135,134) -- (150,179) ;
\draw [fill={rgb, 255:red, 0; green, 0; blue, 0 }  ,fill opacity=1 ]   (227,57.5) -- (267,93) ;
\draw [fill={rgb, 255:red, 0; green, 0; blue, 0 }  ,fill opacity=1 ]   (227,65) -- (259,93) ;
\draw    (174,57.5) -- (135,94) ;
\draw    (174,66) -- (142,96) ;
\draw  [fill={rgb, 255:red, 0; green, 0; blue, 0 }  ,fill opacity=0 ] (367,112) .. controls (367,100.4) and (376.4,91) .. (388,91) .. controls (399.6,91) and (409,100.4) .. (409,112) .. controls (409,123.6) and (399.6,133) .. (388,133) .. controls (376.4,133) and (367,123.6) .. (367,112) -- cycle ;
\draw  [fill={rgb, 255:red, 0; green, 0; blue, 0 }  ,fill opacity=0 ] (505,112) .. controls (505,100.4) and (514.4,91) .. (526,91) .. controls (537.6,91) and (547,100.4) .. (547,112) .. controls (547,123.6) and (537.6,133) .. (526,133) .. controls (514.4,133) and (505,123.6) .. (505,112) -- cycle ;
\draw  [fill={rgb, 255:red, 0; green, 0; blue, 0 }  ,fill opacity=0.58 ] (387,198) .. controls (387,186.4) and (396.4,177) .. (408,177) .. controls (419.6,177) and (429,186.4) .. (429,198) .. controls (429,209.6) and (419.6,219) .. (408,219) .. controls (396.4,219) and (387,209.6) .. (387,198) -- cycle ;
\draw  [fill={rgb, 255:red, 0; green, 0; blue, 0 }  ,fill opacity=0.58 ] (487,198) .. controls (487,186.4) and (496.4,177) .. (508,177) .. controls (519.6,177) and (529,186.4) .. (529,198) .. controls (529,209.6) and (519.6,219) .. (508,219) .. controls (496.4,219) and (487,209.6) .. (487,198) -- cycle ;
\draw   (433,55.5) .. controls (433,40.86) and (444.86,29) .. (459.5,29) .. controls (474.14,29) and (486,40.86) .. (486,55.5) .. controls (486,70.14) and (474.14,82) .. (459.5,82) .. controls (444.86,82) and (433,70.14) .. (433,55.5) -- cycle ;
\draw    (429,198) -- (487,198) ;
\draw    (429,204) -- (486,204) ;
\draw    (388,133) -- (403,178) ;
\draw    (520,132) -- (509,177) ;
\draw    (526,133) -- (515,179) ;
\draw    (394,132) -- (409,177) ;
\draw    (486,55.5) -- (526,91) ;
\draw    (486,63) -- (518,91) ;
\draw    (433,55.5) -- (394,92) ;
\draw    (433,64) -- (401,94) ;

\draw (200.5,57.5) node  [rotate=-359.4]  {$G$};
\draw (459.5,55.5) node  [rotate=-359.4]  {$\overline{G}$};
\draw (149,200) node  [rotate=-359.4]  {$K_{\ell }$};
\draw (408,198) node  [rotate=-359.4]  {$K_{\ell }$};
\draw (249,200) node  [rotate=-359.4]  {$K_{\ell }$};
\draw (508,198) node  [rotate=-359.4]  {$K_{\ell }$};
\draw (129,114) node  [rotate=-359.4]  {$\overline{K_{\ell }}$};
\draw (267,114) node  [rotate=-359.4]  {$\overline{K_{\ell }}$};
\draw (388,112) node  [rotate=-359.4]  {$\overline{K_{\ell }}$};
\draw (526,112) node  [rotate=-359.4]  {$\overline{K_{\ell }}$};

\end{tikzpicture}
    \caption{The graph $H_\ell(G)$ and its complement.}
    \label{fig_graphs}
\end{figure}
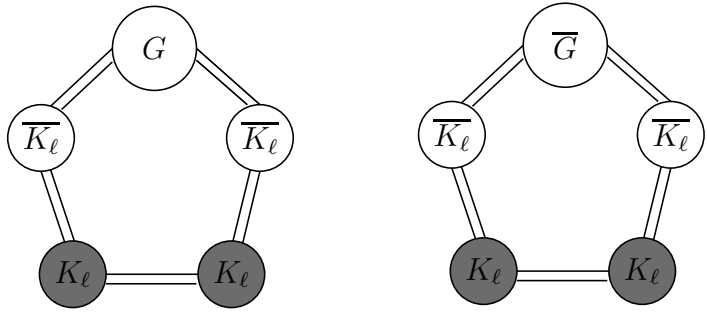

\textbf{The construction:} Let $\ell$ and $m$ be any positive integers, and let $G$ be any graph on $m$ vertices. We define $H_\ell = H_\ell(G)$ to be the graph in Figure \ref{fig_graphs}, where the double lines are to be interpreted in the same way as the $\equiv$ symbol, i.e. including all possible edges between the vertices on either end. Observe that $\overline{H_\ell(G)} \cong H_\ell(\overline{G})$, and that the edges of $H_\ell(G)$ can be split into $X_\ell(G) := G \equiv \overline{K_{2\ell}}$ and $Y_\ell = \overline{K_\ell} \equiv K_\ell \equiv K_\ell \equiv \overline{K_\ell}$, as depicted in Figure \ref{fig decomp}. Clearly $\chi'(G) \leq \chi'(K_m)$, which is at most $m$, since we can assign the numbers $0, 1, \dots, m-1$ to each vertex and color the edge $(i,j)$ by $i - j \mod m$. So if $m\leq 2\ell$, Lemma \ref{chrom} implies that $\cp(X_\ell(G)) = m\ell - e(G)$. Therefore 
\bea \nn
\cp(H_{\ell}(G)) + \cp(\overline{H_\ell(G)}) &=& \cp(H_{\ell}(G)) + \cp(H_\ell(\overline{G}))\\ \nn
&=& \cp(X_\ell(G)) + \cp(X_\ell(\overline{G})) + 2\cp(Y_\ell) \\\nn &=& 2m\ell - \binom{m}{2} + 2\cp(Y_\ell)
\eea
for \emph{any} graph $G$ on $m \leq 2\ell$ vertices. (In fact, this still gives a lower bound on $\cp(H_{\ell}(G)) + \cp(\overline{H_\ell(G)})$ for any $G$ and any $m$.) The term $\cp(Y_\ell)$ was computed in \cite{DEPW}, and we include this calculation in the Appendix:

\begin{figure}
    \centering
    
\begin{tikzpicture}[x=0.6pt,y=0.6pt,yscale=-1,xscale=1]

\draw  [fill={rgb, 255:red, 0; green, 0; blue, 0 }  ,fill opacity=0 ] (161,166.5) .. controls (161,148.55) and (175.55,134) .. (193.5,134) .. controls (211.45,134) and (226,148.55) .. (226,166.5) .. controls (226,184.45) and (211.45,199) .. (193.5,199) .. controls (175.55,199) and (161,184.45) .. (161,166.5) -- cycle ;
\draw   (167,85.5) .. controls (167,70.86) and (178.86,59) .. (193.5,59) .. controls (208.14,59) and (220,70.86) .. (220,85.5) .. controls (220,100.14) and (208.14,112) .. (193.5,112) .. controls (178.86,112) and (167,100.14) .. (167,85.5) -- cycle ;
\draw [fill={rgb, 255:red, 0; green, 0; blue, 0 }  ,fill opacity=1 ]   (199,111) -- (199,135) ;
\draw  [fill={rgb, 255:red, 0; green, 0; blue, 0 }  ,fill opacity=0 ] (369,99) .. controls (369,87.4) and (378.4,78) .. (390,78) .. controls (401.6,78) and (411,87.4) .. (411,99) .. controls (411,110.6) and (401.6,120) .. (390,120) .. controls (378.4,120) and (369,110.6) .. (369,99) -- cycle ;
\draw  [fill={rgb, 255:red, 0; green, 0; blue, 0 }  ,fill opacity=0 ] (468,100) .. controls (468,88.4) and (477.4,79) .. (489,79) .. controls (500.6,79) and (510,88.4) .. (510,100) .. controls (510,111.6) and (500.6,121) .. (489,121) .. controls (477.4,121) and (468,111.6) .. (468,100) -- cycle ;
\draw  [fill={rgb, 255:red, 0; green, 0; blue, 0 }  ,fill opacity=0.58 ] (369,176) .. controls (369,164.4) and (378.4,155) .. (390,155) .. controls (401.6,155) and (411,164.4) .. (411,176) .. controls (411,187.6) and (401.6,197) .. (390,197) .. controls (378.4,197) and (369,187.6) .. (369,176) -- cycle ;
\draw  [fill={rgb, 255:red, 0; green, 0; blue, 0 }  ,fill opacity=0.58 ] (469,176) .. controls (469,164.4) and (478.4,155) .. (490,155) .. controls (501.6,155) and (511,164.4) .. (511,176) .. controls (511,187.6) and (501.6,197) .. (490,197) .. controls (478.4,197) and (469,187.6) .. (469,176) -- cycle ;
\draw    (411,176) -- (469,176) ;
\draw    (411,182) -- (468,182) ;
\draw    (393,119) -- (393,157) ;
\draw    (387,120) -- (387,157) ;
\draw    (494,121) -- (494,157) ;
\draw    (488,120) -- (488,157) ;
\draw [fill={rgb, 255:red, 0; green, 0; blue, 0 }  ,fill opacity=1 ]   (189,111) -- (189,135) ;

\draw (193.5,85.5) node  [rotate=-359.4]  {$G$};
\draw (193.5,166.5) node  [rotate=-359.4]  {$\overline{K_{2\ell } \ }$};
\draw (390,176) node  [rotate=-359.4]  {$K_{\ell }$};
\draw (490,176) node  [rotate=-359.4]  {$K_{\ell }$};
\draw (390,99) node  [rotate=-359.4]  {$\overline{K_{\ell }}$};
\draw (489,100) node  [rotate=-359.4]  {$\overline{K_{\ell }}$};

\end{tikzpicture}

    \caption{Decomposing $H_\ell$ into the edge-disjoint union of the two graphs $X_\ell(G) = G \equiv \overline{K_{2\ell}}$ (left) and $Y_\ell = \overline{K_\ell} \equiv K_\ell \equiv K_\ell \equiv \overline{K_\ell}$ (right).}
    \label{fig decomp}
\end{figure}
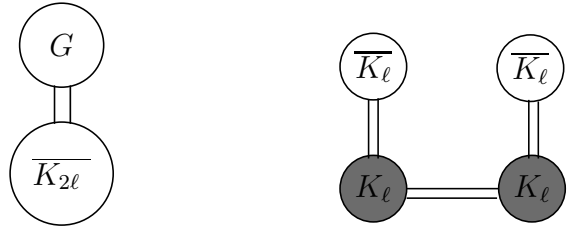

\begin{lem}[Lem. 2 and 3 in \cite{DEPW}]\label{cp(Y)}
For any $\ell$, $\emph{\cp}(Y_\ell) \geq \frac{7}{4}\ell^2 + O(\ell)$, and this is tight infinitely often. 
\end{lem}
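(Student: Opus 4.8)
\emph{The plan.} Write $A,B,C,D$ for the four parts of $Y_\ell$, so that $A$ and $D$ are independent sets on $\ell$ vertices, $B$ and $C$ are cliques on $\ell$ vertices, and the edges of $Y_\ell$ are exactly the $A$--$B$ edges, the $B$-internal edges, the $B$--$C$ edges, the $C$-internal edges and the $C$--$D$ edges. I would prove $\cp(Y_\ell)\ge\tfrac74\ell^2+\ell$ for every $\ell$, and then construct a clique partition of exactly this size whenever $\ell=2m$ for a prime power $m\ge3$; together these give the lemma.

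\emph{The lower bound.} Given any clique partition $\mathcal C$ of $Y_\ell$, split it as $\mathcal C=\mathcal C_A\sqcup\mathcal C_D\sqcup\mathcal C_0$, where $\mathcal C_A$ and $\mathcal C_D$ are the cliques meeting $A$ and $D$, and $\mathcal C_0$ is everything else. This is a partition: a clique meeting $A$ lies in $A\cup B$ with exactly one vertex in $A$ (and symmetrically for $D$), so no clique meets both $A$ and $D$, and every clique of $\mathcal C_0$ lies in $B\cup C$. A clique of $\mathcal C_A$ with $s$ vertices in $B$ covers $s$ of the $A$--$B$ edges and $\binom s2$ $B$-internal edges; since $s-\binom s2\le1$ for all integers $s\ge0$ and all $\ell^2$ of the $A$--$B$ edges are covered within $\mathcal C_A$, summing gives $|\mathcal C_A|\ge\ell^2-\beta_A$, where $\beta_A$ is the number of $B$-internal edges covered within $\mathcal C_A$; symmetrically $|\mathcal C_D|\ge\ell^2-\beta_D$. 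A clique of $\mathcal C_0$ with $p$ vertices in $B$ and $q$ in $C$ covers $pq$ of the $B$--$C$ edges, $\binom p2$ $B$-internal edges and $\binom q2$ $C$-internal edges; here the key inequality is $pq\le\tfrac43\bigl(1+\binom p2+\binom q2\bigr)$ for all integers $p,q\ge0$, i.e.\ $2p^2+2q^2-3pq-2p-2q+4\ge0$, which holds because this quadratic has positive definite Hessian and its unique minimum, value $0$, is attained at $(p,q)=(2,2)$. Since all $\ell^2$ of the $B$--$C$ edges are covered within $\mathcal C_0$, summing gives $|\mathcal C_0|\ge\tfrac34\ell^2-\gamma_B-\gamma_C$, where $\gamma_B=\binom\ell2-\beta_A$ and $\gamma_C=\binom\ell2-\beta_D$ count the $B$- and $C$-internal edges covered within $\mathcal C_0$. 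Adding the three estimates, the $\beta$'s cancel and leave $|\mathcal C|\ge\tfrac{11}4\ell^2-2\binom\ell2=\tfrac74\ell^2+\ell$.

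\emph{The upper bound.} Take $\ell=2m$ with $m$ a prime power $\ge3$, split $B=V_1\sqcup V_2$ and $C=W_1\sqcup W_2$ into halves of size $m$ indexed by $\F_m$, and fix distinct nonzero $a,b\in\F_m$. For $(x,y)\in\F_m^2$ let $K_{x,y}$ be the clique on $\{V_1[x],V_2[y],W_1[x+ay],W_2[x+by]\}$ (a $K_4$, since $B$ and $C$ are cliques and $B$--$C$ is complete bipartite). Each of the six pairwise coordinate projections of the array $\{(x,y,x+ay,x+by):x,y\in\F_m\}$ is a bijection onto $\F_m^2$ (each is a $2\times2$ matrix over $\F_m$ with rows among $(1,0),(0,1),(1,a),(1,b)$, all of which are pairwise linearly independent), so the $m^2=\tfrac14\ell^2$ cliques $K_{x,y}$ cover each $B$--$C$ edge exactly once, their $B$-pairs are pairwise distinct (they are exactly the $V_1$--$V_2$ edges of $B$), and their $C$-pairs are pairwise distinct; hence this is a legal partial clique partition. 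The $B$-internal edges not yet used form $K_m\sqcup K_m$ (on $V_1$ and on $V_2$); edge-colour this with at most $m$ colours, give the colour classes distinct vertices of $A$, turn each colour-class edge into the triangle it spans with its $A$-vertex, and cover the remaining $A$--$B$ edges by single edges. This uses $\ell^2-\bigl(\tfrac14\ell^2-\tfrac\ell2\bigr)=\tfrac34\ell^2+\tfrac\ell2$ cliques on the $A$ side, and symmetrically on the $D$ side, so the total is $\tfrac14\ell^2+2\bigl(\tfrac34\ell^2+\tfrac\ell2\bigr)=\tfrac74\ell^2+\ell$, matching the lower bound.

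\emph{Main obstacle.} The delicate point is the $B$--$C$ packing in the upper bound: since the $K_4$'s sit inside $Y_\ell$, each one also consumes a $B$-internal and a $C$-internal edge, so they must use pairwise distinct $B$-pairs \emph{and} pairwise distinct $C$-pairs --- the naive packing using all of $V_1\times V_2$ covers each $V_1$--$V_2$ edge $m$ times and is illegal. The resolution is to recognize this double distinctness requirement as an orthogonal array $\mathrm{OA}(m^2,4,m,2)$, equivalently a pair of mutually orthogonal Latin squares of order $m$ (which exist for all $m\notin\{2,6\}$, and are explicit over $\F_m$ when $m$ is a prime power). For the lower bound the only non-obvious ingredient is the sharp constant $\tfrac43$ in $pq\le\tfrac43(1+\binom p2+\binom q2)$: it is precisely this value that makes $\beta_A$ and $\beta_D$ cancel and produces the coefficient $\tfrac74$.
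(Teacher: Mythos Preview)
Your proof is correct and follows essentially the same strategy as the paper's. For the lower bound, the paper separates out the cliques meeting both middle $K_\ell$'s and minimizes $\sum_i\bigl(\binom{a_i}{2}+\binom{b_i}{2}+1\bigr)$ subject to $\sum_i a_ib_i=\ell^2$, finding the optimum at $a_i=b_i=2$; your pointwise inequality $pq\le\tfrac43\bigl(1+\binom p2+\binom q2\bigr)$ is exactly the dual formulation of that optimization, and your treatment of $\mathcal C_A,\mathcal C_D$ via $s-\binom s2\le1$ reproduces Lemma~\ref{chrom} inline. The one genuine difference is the upper bound: the paper simply cites \cite{PSW} for a $K_4$-decomposition of the $B$--$C$ edges when $\ell\ge14$ is even, whereas you supply an explicit orthogonal-array (MOLS) construction over $\F_m$ for $\ell=2m$ with $m$ a prime power. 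This buys you self-containment and the exact value $\cp(Y_\ell)=\tfrac74\ell^2+\ell$ for those $\ell$, matching your sharpened lower-bound constant, at the cost of covering fewer values of $\ell$ (still infinitely many, which is all the lemma claims).
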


So for any $G$ on $m$ vertices, we have \be \label{m,l bound}
\cp(H_{\ell}(G)) + \cp(\overline{H_\ell(G)}) \geq 2m\ell - \binom{m}{2} + \frac{7}{2}\ell^2.
\ee
Note that $H_\ell(G)$ has $n := m + 4\ell$ vertices, so when we maximize (\ref{m,l bound}) in $m$ while keeping $n$ fixed, we find that the optimum occurs at $m = \frac{9}{8}\ell$. At this value of $m$, the lower bound is $(8 - \frac{81}{128})\ell^2 + O(\ell)$ for a graph on $\frac{41}{8}\ell$ vertices, implying that, for infinitely many $n$, \be \nn 
\max_{G \in \mathcal{G}_n} \cp(G) + \cp(\overline{G}) \geq \frac{(8 - \frac{81}{128})}{(\frac{41}{8})^2} n^2 + O(n) = \frac{23}{82} n^2 + O(n).
\ee
Note that if $G$ is a self-complementary graph (i.e. $G \cong \overline{G}$), then $H_{\ell}(G)$ is also self-complementary. \qed

\subsection{Improving the upper bound}\label{sec:upper_bd}

The problem of partitioning a graph $G$ into as few cliques as possible is equivalent to the problem of \emph{packing} disjoint copies of $K_3, K_4, \dots,  K_n$ inside of $G$ in such a way as to maximize a certain linear objective function. Indeed, given a clique partition $C$ of $G$, let $C_i$ denote the number of cliques of size $i$ in $C$, for $i = 2, \dots, n$. Then
$|C| = \sum_{i=2}^{n} C_i$ and $\sum_{i=2}^n \binom{i}{2}C_i = |E(G)|$, so
\bea \nn
\cp(G) &=& \min_C |C| \\
&=& E(G) - \underbrace{\max_C \sum_{i \geq 3} \left(\binom{i}{2} - 1\right)C_i}_{=: v(G)}.
\eea
We will also consider \emph{$r$-restricted clique packings/partitions}, in which the largest clique can have size at most $r$. We define $\cp(G, r)$ to be the minimum number of cliques of size at most $r$ needed to partition the edges of $G$. Equivalently, $\cp(G, r) = E(G) - v_r(G)$, where
\be\label{v_r def}
v_r(G) := \max_C \sum_{i = 3}^r \left(\binom{i}{2} - 1\right)C_i.
\ee
Clearly $\cp(G, r) \geq \cp(G)$, and one would expect the numbers $\cp(G,r)$ and $\cp(G)$ to be relatively close for large $r$. This is indeed the case, as we show in the following lemma.

\begin{lem}\label{r close}
For any $\epsilon > 0$, there exists an integer $r_0 = r_0(\epsilon)$ such that for any $r \geq r_0$ and any graph $G$ on $n$ vertices, 
$$\emph{\cp}(G, r) \leq \emph{\cp}(G) + \epsilon \cdot n^2.$$
\end{lem}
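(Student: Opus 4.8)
The plan is to show that any clique partition $C$ of $G$ can be converted into an $r$-restricted one while increasing the number of cliques by at most $\epsilon n^2$. Consider a clique $K_i$ in $C$ with $i > r$. We need to replace it by cliques of size at most $r$ that partition its edge set $E(K_i) = \binom{i}{2}$. The crude option — using all single edges — costs $\binom{i}{2}$ cliques instead of $1$, which is wasteful. Instead, I would partition the vertex set of $K_i$ into $\lceil i/r \rceil$ blocks each of size at most $r$, take each block as a clique (size $\le r$, so allowed), and then cover the remaining ``between-block'' edges. The between-block edges form a complete multipartite graph on $\lceil i/r\rceil$ parts; covering these with edges alone costs at most $\binom{i}{2} - \lceil i/r\rceil\binom{r}{2}/\text{(something)}$ — but a cleaner bound is simply to note that the number of between-block edges is at most $\binom{i}{2}$ and, more usefully, is at most $\binom{\lceil i/r\rceil}{2} r^2 \le i^2/(2r) \cdot$const when $r \mid i$; in general the between-block edge count is at most $i^2/r$. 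Covering those edges one at a time, the clique $K_i$ gets replaced by at most $\lceil i/r\rceil + i^2/r \le 2i^2/r$ cliques for $r \le i$.

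Now I sum over all the large cliques in $C$. Let $C_{>r}$ be the set of cliques in $C$ of size exceeding $r$, and for $K_i \in C_{>r}$ write $n_i$ for its size. The blow-up in the total count is at most $\sum_{K_i \in C_{>r}} 2n_i^2/r$. Since the cliques in a partition are edge-disjoint, $\sum_i \binom{n_i}{2} \le \binom{n}{2}$, hence $\sum_i n_i^2 \le n^2 + \sum_i n_i \le 2n^2$ (using $\sum n_i \le$ total vertices counted with multiplicity $\le n^2$, or more simply $n_i^2 \le 2\binom{n_i}{2} + n_i$ and bounding). Therefore the total increase is at most $2n^2/r \cdot 2 = 4n^2/r$, which is $\le \epsilon n^2$ once $r \ge r_0 := \lceil 4/\epsilon\rceil$. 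Combining, $\cp(G,r) \le \cp(G) + \epsilon n^2$ for all such $r$ and all $G$ on $n$ vertices.

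The main technical point — really the only place one must be slightly careful — is the edge-disjointness bookkeeping: one must exploit that the large cliques being subdivided are mutually edge-disjoint inside $G$ so that $\sum_i n_i^2 = O(n^2)$, rather than naively bounding each $n_i$ by $n$ (which would give a useless $O(n^3/r)$). Everything else is a direct substitution. I would also remark that the constant $4$ is not optimized; a smarter replacement (e.g. recursively applying a near-optimal clique partition of $K_{n_i}$, which has $\cp(K_{n_i}) = (1+o(1))n_i^2/r$ worth of size-$\le r$ cliques when one only insists on parts of size $r$) tightens it, but any $O(1/r)$ bound suffices for the statement.
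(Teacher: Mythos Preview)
Your argument has a genuine arithmetic error that is fatal to the approach. You claim that the number of between-block edges is at most $\binom{\lceil i/r\rceil}{2}\, r^2 \le C\, i^2/r$, but in fact $\binom{i/r}{2}\, r^2 = \tfrac{i(i-r)}{2}$, which is of order $i^2$, not $i^2/r$. Consequently, the single large clique $K_i$ gets replaced by roughly $\lceil i/r\rceil + \tfrac{i(i-r)}{2}$ cliques, and the increase is $\Theta(i^2)$ rather than $O(i^2/r)$. To see that this breaks the proof (and is not merely a loose estimate), take $G = K_n$. Then $\cp(G)=1$, but your block-plus-single-edges replacement produces $\lceil n/r\rceil$ block cliques together with $\tfrac{n(n-r)}{2}$ single edges, i.e.\ $(\tfrac12 - o(1))n^2$ cliques. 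The increase is therefore $(\tfrac12 - o(1))n^2$, which is certainly not $\le \epsilon n^2$ for small $\epsilon$, no matter how large $r$ is.

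The missing ingredient is an \emph{efficient} decomposition of $K_m$ into small cliques: one needs to replace each $K_m$ (with $m>r$) by roughly $\binom{m}{2}/\binom{t}{2}$ cliques of some fixed size $t\le r$, not by $\Theta(m^2)$ of them. The paper obtains this via Wilson's theorem, which guarantees (for any fixed $t$ and large enough $m$) a partition of $K_m$ into copies of $K_t$ plus $o(m^2)$ leftover edges. With $t\approx 1/\epsilon$, each large clique contributes at most $\binom{m}{2}/\binom{t}{2} + o(m^2) \le \epsilon\binom{m}{2}$ new cliques, and summing over the (edge-disjoint) partition gives a total increase of at most $\epsilon\,|E(G)|$. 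Your closing remark about ``recursively applying a near-optimal clique partition of $K_{n_i}$'' is pointing in exactly this direction, but it is not a side comment---it is the whole proof, and it requires a nontrivial input such as Wilson's theorem (or a projective-plane/design construction). The block decomposition you actually carried out does not suffice.
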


\begin{proof}
We make use of the following fact, which is a straightforward consequence of Wilson's theorem \cite{Wilson}: for any fixed $t \geq 2$ and $\epsilon > 0$, there is an integer $m_0 = m_0(t, \epsilon)$ such that for all $m \geq m_0$,  there is a partition of $K_m$ into edge-disjoint copies of $K_t$ and at most $\epsilon m^2$ leftover edges. Set $t = \frac{1}{2\epsilon}$ and $r_0 = m_0(t, \epsilon/5)$.

Let $C$ be a clique partition with $|C| = \cp(G)$. For any $r \geq r_0$, we can obtain an $r$-restricted clique partition $\widetilde{C}$ from $C$ as follows:  keep each clique of size at most $r$, and, for each clique $K_m$ with $m > r$, decompose it into at most $\binom{m}{2}/\binom{t}{2}$ copies of $K_t$ and cover the remaining edges (of which there are at most $\frac{\epsilon}{5} \cdot m^2$) with $K_2$'s. This gives a clique partition $\widetilde{C}$ of size
\bea \nn
|\widetilde{C}| &\leq& \sum_{i = 2}^{r} C_i + \sum_{i > r} \left(\frac{\binom{i}{2}}{\binom{t}{2}} + \frac{\epsilon}{5} i^2  \right)C_i \\ \nn
& \leq & \sum_{i=2}^n C_i + \epsilon \cdot \sum_{i=2}^n \binom{i}{2} C_i \\ \nn
& = & |C| + \epsilon \cdot |E(G)|
\eea
from which the lemma follows.
\end{proof}

\subsubsection{Fractional clique packings}
For a fixed family $\mathcal{F}$ of graphs and any graph $G$, let $\binom{G}{\mathcal{F}}$ denote the set of (unlabeled, non-induced) subgraphs of $G$ which are isomorphic to some $F \in \mathcal{F}$. Following Keevash and Sudakov \cite{KS} and Yuster \cite{Y1}, we say a function $\psi: \binom{G}{\mathcal{F}} \to [0,1]$ is a \emph{fractional $\mathcal{F}$-packing of $G$} if for every edge $e \in E(G)$, we have
$$\sum_{e \in H \in \binom{G}{\mathcal{F}}} \psi(H) \leq 1.$$
We denote by $G_{\mathcal{F}}$ the polyhedron of all fractional $\mathcal{F}$-packings of $G$. As we are interested in the fractional analogue of clique packings, we will only be concerned with families of the form
$$\mathcal{F}_r := \{K_3, K_4, \dots, K_r\}.$$
Let $\nu_r(G)$ be the value of the linear program
\be\label{LP_cp}
\max_{\psi \in G_{\mathcal{F}_r}} \sum_{H \in \binom{G}{\mathcal{F}_r}} \left(\binom{|H|}{2} -1\right)\psi(H).
\ee
When the objective function is simply $\sum_{H \in \binom{G}{\mathcal{F}}}\psi(H)$, and the family $\mathcal{F} = \{F\}$ is just a single graph, a theorem of Haxell and R{\"o}dl \cite{HR} implies that relaxing the domain of maximization from (integer) packings to fractional packings can only change the value of the optimum by $o(n^2)$.  Subsequently, Yuster \cite{Y1} extended this result to arbitrary families of graphs. For finite families (such as $\mathcal{F}_r$), Yuster's proof easily extends to arbitrary linear objective functions \cite{Y2}. Therefore:
\begin{thm}\label{YHR} For any $r \geq 3$ and $G \in \mathcal{G}_n$,
$$v_r(G) - \nu_r(G) = o(n^2).$$
\end{thm}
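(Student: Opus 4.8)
One inequality is free: an integral $\mathcal{F}_r$-packing is a fractional $\mathcal{F}_r$-packing with the same objective value, so $\nu_r(G) \ge v_r(G)$. The content is the reverse bound $v_r(G) \ge \nu_r(G) - o(n^2)$, that is, rounding a near-optimal fractional clique packing to an integral one of almost the same value. The plan is to recast this as a fractional-versus-integral matching problem in an auxiliary hypergraph and then invoke the nibble-type results of Haxell and R\"odl together with their extensions by Yuster.

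Let $\mathcal{H} = \mathcal{H}(G)$ be the hypergraph on vertex set $E(G)$ whose hyperedges are the edge sets $E(K)$ of the cliques $K \in \binom{G}{\mathcal{F}_r}$, the hyperedge $E(K)$ carrying weight $w(K) := \binom{|K|}{2} - 1$. Then a fractional $\mathcal{F}_r$-packing of $G$ is precisely a fractional matching of $\mathcal{H}$, an integral one is a matching, and $\nu_r(G)$, $v_r(G)$ are respectively the maximum $w$-weight of a fractional and of an integral matching of $\mathcal{H}$. Since $r$ is a fixed constant, the parameters of $\mathcal{H}$ are all tame: every hyperedge has at most $\binom{r}{2}$ vertices and weight at most $\binom{r}{2}-1$; a fixed $e \in E(G)$ lies in at most $O(n^{r-2})$ hyperedges; any two vertices of $\mathcal{H}$ lie together in at most $O(n^{r-3})$ hyperedges, which is $o(1)$ times the maximum degree; and $|V(\mathcal{H})| = |E(G)| \le \binom{n}{2}$. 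We may also fix an optimal fractional packing $\psi$ attained at a vertex of $G_{\mathcal{F}_r}$, hence supported on at most $|E(G)| = O(n^2)$ cliques.

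The argument then proceeds in three layers. \emph{(i) One clique size, counting objective.} For the family $\{K_k\}$ with objective $\sum \psi(K)$, the relevant sub-hypergraph is $\binom{k}{2}$-uniform with codegree negligible beside its maximum degree, so the Haxell--R\"odl theorem (proved by a semi-random / R\"odl-nibble argument, and valid for every host graph) lets us round any fractional matching to an integral matching losing only $o(n^2)$. \emph{(ii) Bounded weights.} Because $w$ takes only $O(1)$ distinct values, each at most $\binom{r}{2}-1$, passing from the counting objective to the weighted objective $\sum_K w(K)\psi(K)$ costs a further $o(n^2)$: apply (i) within each of the constantly many weight classes, or observe that the nibble argument is insensitive to bounded multiplicities. \emph{(iii) The full family $\mathcal{F}_r$.} Now $\mathcal{H}$ has hyperedges of several sizes; the key point is that it still has bounded edge size and codegree negligible beside its maximum degree, so a suitably organized nibble applies despite the mixture of clique sizes. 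This is exactly Yuster's extension of Haxell--R\"odl from a single graph to an arbitrary family, which we invoke; for the \emph{finite} family $\mathcal{F}_r$ it adapts directly to the weighted objective of (ii), yielding $v_r(G) \ge \nu_r(G) - o(n^2)$ and hence the theorem.

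The crux of the whole scheme is layer (i): the bare R\"odl nibble / Pippenger--Spencer theorem only produces near-perfect matchings in near-regular hypergraphs, whereas here we must certify that an \emph{arbitrary}, possibly very non-uniform, fractional matching is shadowed by an integral matching up to $o(|V|)$ --- and it is precisely this robust form that we quote from Haxell and R\"odl (and, for several clique sizes, from Yuster). Granting it, layers (ii) and (iii) are essentially bookkeeping, relying only on $r$ being a fixed constant so that there are $O(1)$ weight values and $O(1)$ clique sizes to deal with.
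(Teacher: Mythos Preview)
Your proposal is correct and follows the same route as the paper: the paper does not give an independent proof of this theorem but simply records that it follows from Haxell--R\"odl (for a single graph $F$ with the counting objective) together with Yuster's extension to arbitrary families, noting that for a \emph{finite} family the argument adapts to linear objective functions (the paper attributes this last point to a personal communication from Yuster). Your sketch is a more detailed unpacking of exactly these citations, with the auxiliary-hypergraph framing making explicit why bounded rank and negligible codegree are what drive the nibble; the only organizational quibble is that your layer~(ii) cannot literally be done weight-class-by-weight-class before layer~(iii) (the resulting integral packings need not be mutually edge-disjoint), but since you ultimately defer to Yuster's family result and then pass to the weighted objective, the argument lands in the right place.
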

The advantages of studying fractional clique packings rather than clique partitions are twofold. First, solving the linear program (\ref{LP_cp}) is computationally feasible, unlike the corresponding integer program. Second, they can be \emph{averaged}, which not only enables one to turn finite computations into asymptotic bounds, but also allows one to leverage the results of a search on $n$ vertices to reduce the search space when looking for a minimizer on $n+1$ vertices. This is the approach used by Keevash and Sudakov in \cite{KS}, and the following averaging lemma (for a different LP) appears as their Lemma 2.1, with the same proof.

For each $r$, define
$$f_r(n) := \min_{G \in \mathcal{G}_n} \nu_r(G) + \nu_r(\overline{G}).$$

\begin{lem}\label{averaging}
For any $r \geq 3$, the sequence $\frac{f_r(n)}{n(n-1)}$ is increasing in $n$. 
\end{lem}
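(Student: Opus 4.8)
The plan is to exhibit, for any graph $G$ on $n+1$ vertices, a way to build fractional packings on $n$-vertex induced subgraphs of $G$ out of an optimal fractional packing on $G$, and then average over all $n+1$ vertex-deletions. Concretely, I would fix $G \in \mathcal{G}_{n+1}$ attaining $f_r(n+1) = \nu_r(G) + \nu_r(\overline{G})$, and fix optimal fractional packings $\psi$ of $G$ and $\psi'$ of $\overline{G}$ in the sense of (\ref{LP_cp}). For each vertex $v \in V(G)$, let $G_v = G - v$. The key point is that restricting $\psi$ to $\binom{G_v}{\mathcal{F}_r}$ (i.e., keeping only the copies of $K_3,\dots,K_r$ that avoid $v$) yields a valid fractional $\mathcal{F}_r$-packing of $G_v$: the edge constraints for edges not incident to $v$ can only get slacker when we drop some cliques. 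Likewise the restriction of $\psi'$ is a fractional packing of $\overline{G} - v = \overline{G_v}$. Hence $\nu_r(G_v) + \nu_r(\overline{G_v})$ is at least the objective value of these restricted packings, so $f_r(n) \le \nu_r(G_v) + \nu_r(\overline{G_v})$ for every $v$, and in particular $f_r(n)$ is at most the average of $\nu_r(G_v)+\nu_r(\overline{G_v})$ over the $n+1$ choices of $v$.

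The second step is the averaging computation. Each clique $H$ appearing in $\psi$ has some number of vertices $|H| \le r$, and $H$ survives in exactly $n+1 - |H|$ of the graphs $G_v$. Therefore
\be \nn
\sum_{v \in V(G)} \sum_{H \in \binom{G_v}{\mathcal{F}_r}} \Bigl(\binom{|H|}{2} - 1\Bigr)\psi(H)
= \sum_{H \in \binom{G}{\mathcal{F}_r}} (n+1 - |H|)\Bigl(\binom{|H|}{2} - 1\Bigr)\psi(H).
\ee
Since $3 \le |H| \le r \ll$ is bounded but not constant, the naive bound $n+1-|H| \ge n+1-r$ would lose too much; instead I would use the crude but sufficient bound $n + 1 - |H| \ge (n-1)\cdot\frac{n+1-|H|}{n+1}$... actually the cleanest route is to note $n+1-|H| \ge n-1$ only when $|H|\le 2$, which fails here, so one must be slightly more careful. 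The trick Keevash–Sudakov use is to compare against $n(n-1)$: one shows $\frac{n+1-|H|}{(n+1)n} \ge \frac{1}{n(n-1)}\cdot\bigl(1 - \tfrac{|H|-2}{n}\bigr)$ is not quite an identity, so instead I would directly bound using the fact that for $|H| \le r$ and $n$ large the loss is absorbed — but since the lemma claims monotonicity \emph{exactly}, the correct move is: $(n+1-|H|) \ge \frac{(n+1-|H|)}{?}$. Let me restate the honest version: summing the per-vertex inequality $f_r(n) \le \nu_r(G_v) + \nu_r(\overline{G_v})$ and dividing by $n+1$ gives
\be \nn
f_r(n) \;\le\; \frac{1}{n+1}\sum_{v} \Bigl(\nu_r(G_v) + \nu_r(\overline{G_v})\Bigr)
\;\le\; \frac{1}{n+1}\sum_{v} \Bigl(\text{obj of restricted }\psi \text{ on }G_v + \text{same for }\psi'\Bigr),
\ee
and each clique of size $i$ contributes weight $(\binom{i}{2}-1)\psi(H)$ to $n+1-i \le n-1$ of the summands (using $i \ge 2$), so the whole double sum is at most $(n-1)\bigl(\nu_r(G) + \nu_r(\overline{G})\bigr) = (n-1)f_r(n+1)$. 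Dividing, $f_r(n)/\bigl(n(n+1)\bigr)\cdot n$... rearranging yields $\frac{f_r(n)}{n(n-1)} \le \frac{f_r(n+1)}{(n+1)n}$, which is exactly the claim.

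The main obstacle is getting the combinatorial constant in the averaging exactly right: one needs the bound $n+1 - |H| \le n - 1$, which holds precisely because $|H| \ge 3 > 2$ for every clique in $\mathcal{F}_r$ — this is why the objective excludes $K_2$'s (equivalently, why the $\binom{i}{2}-1$ coefficient vanishes for $i=2$), and it is the one place where the structure of the problem, rather than a generic counting argument, is used. Everything else — the validity of the restriction as a fractional packing, the linearity of the objective under the counting swap — is routine.
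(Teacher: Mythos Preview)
Your argument has a genuine gap: the second inequality in your chain
\[
f_r(n) \;\le\; \frac{1}{n+1}\sum_{v} \bigl(\nu_r(G_v) + \nu_r(\overline{G_v})\bigr) \;\le\; \frac{1}{n+1}\sum_{v} \bigl(\text{obj of restricted }\psi \text{ on }G_v + \text{same for }\psi'\bigr)
\]
points the wrong way. Since $\nu_r(G_v)$ is a \emph{maximum} over fractional packings, the restriction of $\psi$ to $G_v$ only certifies a \emph{lower} bound $\nu_r(G_v) \ge \text{(obj of restricted }\psi)$, not an upper bound. So you cannot pass from $\sum_v \nu_r(G_v)$ down to the double sum you then bound by $(n-1)f_r(n+1)$. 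Your later observation that each clique of size $i\ge 3$ survives in at most $n-1$ of the $G_v$ is correct, but it sits on the wrong side of the broken inequality and does no work.

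The paper's proof runs the averaging in the opposite direction, and this is not a cosmetic difference --- it is exactly what makes the inequalities line up. One takes optimal packings $\psi_i,\overline{\psi_i}$ on each $n$-vertex induced subgraph $G_i$ (each with combined value at least $f_r(n)$) and forms $\psi = \frac{1}{n-1}\sum_i \psi_i$ on the big graph $G$. Validity of $\psi$ as a fractional packing uses only that every \emph{edge} of $G$ lies in exactly $n-1$ of the $G_i$; no per-clique counting is needed. The objective of $\psi$ (plus that of $\overline{\psi}$) is then at least $\frac{n+1}{n-1}f_r(n)$, and since this holds for every $G\in\mathcal{G}_{n+1}$, one gets $f_r(n+1) \ge \frac{n+1}{n-1}f_r(n)$, which rearranges to the claim. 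Note in particular that the ``$|H|\ge 3$'' point you highlight plays no role in the correct argument.
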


\begin{proof}
Let $G \in \mathcal{G}_{n+1}$, and let $G_1, \dots, G_{n+1}$ be the induced subgraphs on the vertex subsets of size $n$. Let $\psi_i, \overline{\psi_i}$ be optimal fractional packings on $G_i$ and $\overline{G_i}$. Since each edge of $G$ (and $\overline{G}$) occurs in $n-1$ of the $G_i$, we have that $$\psi := \frac{1}{n-1}\sum_{i=1}^{n+1} \psi_i, \hspace{20pt} \overline{\psi} := \frac{1}{n-1}\sum_{i=1}^{n+1}\overline{\psi_i}$$
are fractional packings on $G$ and $\overline{G}$ with combined objective value of at least $\frac{n+1}{n-1}f_r(n)$, and hence $\frac{f_r(n+1)}{(n+1)n} \geq \frac{(n+1)f_r(n)}{n(n-1)(n+1)} = \frac{f_r(n)}{n(n-1)}$, as claimed.
\end{proof}

Since the sequence $\frac{f_r(n)}{n(n-1)}$ is obviously bounded above by $1/2$, it follows that it converges to a limit $c_r \in (0, 1/2)$. Since $c_r$ is increasing in $r$, the sequence $\{c_r\}$ also converges to a limit that we will call $c_{\infty}$.

\begin{thm}
$$\max_{G \in \mathcal{G}_n} \emph{\cp}(G) + \emph{\cp}(\overline{G}) \sim \left(\frac{1}{2}-c_{\infty}\right) n^2.$$
\end{thm}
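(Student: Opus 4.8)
The plan is to restate the theorem entirely in terms of the packing quantities $v(G)$, $v_r(G)$, $\nu_r(G)$ already introduced, and then chain together the three tools in hand: the truncation estimate (Lemma \ref{r close}), the integrality‑gap bound (Theorem \ref{YHR}), and the averaging plus convergence for $f_r$ (Lemma \ref{averaging} together with $c_r \uparrow c_\infty$). First I would record the identity
\[
\cp(G) + \cp(\overline{G}) = \big(E(G) - v(G)\big) + \big(E(\overline{G}) - v(\overline{G})\big) = \binom{n}{2} - \big(v(G) + v(\overline{G})\big),
\]
which uses $E(G) + E(\overline{G}) = \binom{n}{2}$. Hence, writing $g(n) := \min_{G \in \mathcal{G}_n} v(G) + v(\overline{G})$, we have $\max_{G \in \mathcal{G}_n} \cp(G) + \cp(\overline{G}) = \binom{n}{2} - g(n)$, and since $\binom{n}{2} = \tfrac12 n^2 + O(n)$ the theorem is equivalent to the single statement $g(n) = c_\infty n^2 + o(n^2)$.

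To estimate $g(n)$ I would fix $\epsilon > 0$ and choose $r = r(\epsilon)$ large enough that simultaneously (i) Lemma \ref{r close} applies with error parameter $\epsilon$, i.e.\ $v(G) - \epsilon n^2 \le v_r(G) \le v(G)$ for every $G \in \mathcal{G}_n$ (the right inequality is just the trivial $\cp(G,r) \ge \cp(G)$), and (ii) $|c_r - c_\infty| \le \epsilon$, possible because $c_r$ increases to $c_\infty$. Now \emph{with $r$ frozen}, Theorem \ref{YHR} gives $0 \le \nu_r(G) - v_r(G) = o(n^2)$ uniformly over $G \in \mathcal{G}_n$ (recall $\nu_r \ge v_r$ since the fractional program relaxes the integer one), so for all $n$ large depending on $\epsilon$ the chain
\[
v(G) - \epsilon n^2 \le v_r(G) \le \nu_r(G) \le v_r(G) + \epsilon n^2 \le v(G) + \epsilon n^2
\]
holds for every $n$-vertex $G$, that is, $|v(G) - \nu_r(G)| \le \epsilon n^2$. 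Adding the same bound for $\overline{G}$ and then minimizing over $G$ yields $|g(n) - f_r(n)| \le 2\epsilon n^2$. By Lemma \ref{averaging}, $f_r(n)/(n(n-1))$ is increasing with limit $c_r$, so $f_r(n) = c_r n^2 + o(n^2)$; combining this with (ii) gives $|g(n) - c_\infty n^2| \le 4\epsilon n^2$ for all large $n$. Letting $\epsilon \to 0$ yields $g(n) = c_\infty n^2 + o(n^2)$, hence the theorem; note $c_\infty < \tfrac12$ — as already follows from the lower bound of Section \ref{sec:lower_bd} — so that the leading constant $\tfrac12 - c_\infty$ is genuinely positive.

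Given that the substantive work (Lemma \ref{r close}, Theorem \ref{YHR}, Lemma \ref{averaging}) is already done, I expect no real obstacle here beyond getting the order of quantifiers right: the $o(n^2)$ in Theorem \ref{YHR} depends on $r$, so $r$ must be chosen as a function of $\epsilon$ \emph{before} sending $n \to \infty$, and one cannot let $r \to \infty$ simultaneously with $n$. It is also worth noting \emph{why} the proof must detour through the fractional relaxation rather than averaging $v(G) + v(\overline{G})$ directly — integer clique packings cannot be averaged over induced subgraphs the way fractional ones are in the proof of Lemma \ref{averaging} — and why the $r$-truncation is unavoidable: the finiteness of $\mathcal{F}_r$ is exactly what makes Theorem \ref{YHR} and Lemma \ref{averaging} available, so Lemma \ref{r close} is needed as the bridge from $\cp(G)$ to $\cp(G,r)$.
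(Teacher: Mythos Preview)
Your proof is correct and follows essentially the same approach as the paper: fix $\epsilon$, choose $r$ via Lemma \ref{r close} and $c_r \to c_\infty$, then take $n$ large to invoke Theorem \ref{YHR} and the convergence $f_r(n)/n(n-1) \to c_r$, and chain the resulting inequalities. Your reformulation in terms of $g(n) = \min_G v(G) + v(\overline{G})$ and the explicit remarks on the order of quantifiers are a bit more careful than the paper's write-up, but the argument is the same.
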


\begin{proof}
This essentially follows from Lemma \ref{r close} and Theorem \ref{YHR}. More explicitly, for any $\epsilon > 0$, we can pick $r$ large enough so that $|\cp(G) - \cp(G,r)| < \epsilon n^2$ for any $G \in \mathcal{G}_n$, and $|c_r - c_{\infty}| < \epsilon$. Now pick $n$ large enough so that $|v_r(G) - \nu_r(G)| < \epsilon  n^2$ for any $G \in \mathcal{G}_n$ and $|f_r(n) - c_r  n^2| < \epsilon n^2.$ It follows that 
$$\max_{G \in \mathcal{G}_n} \cp(G) + \cp(\overline{G}) \in \left(\frac{1}{2}-c_{\infty} \pm 8\epsilon\right) n^2$$
for $n$ sufficiently large.
\end{proof}

The same argument shows that $\max_{G \in \mathcal{G}_n} {\cp}(G, r) + {\cp}(\overline{G},r) \sim \left(\frac{1}{2}-c_{r}\right) n^2$. Let us define $\alpha_r := \frac{1}{2} - c_r$, and $\alpha_{\infty} = \frac{1}{2} - c_\infty$. We seek an upper bound on $\alpha_{\infty}$, and since $\alpha_\infty \leq \alpha_r = \frac{1}{2}-c_r \leq \frac{1}{2} - \frac{f_r(n)}{n(n-1)}$ for any $n$, it suffices for our purposes to compute a lower bound on the value of $\frac{f_r(n)}{n(n-1)}$ for any particular pair of positive integers $(r, n)$. For example, a modern computer can compute $f_4(8) = 6$ numerically by solving the LP (\ref{LP_cp}) on every non-isomorphic graph on $8$ vertices. This shows that $\alpha_\infty \leq \alpha_4 \leq \frac{1}{2} -  \frac{6}{8\cdot 7} = \frac{11}{28} \approx 0.3928$. This already beats the best bound one can get from purely Ramsey-based arguments\footnote{As was remarked in \cite{DEPW}, one can begin with a maximal collection of edge disjoint $K_r$'s (instead of triangles) in $G$ and $\overline{G}$, and bound the number of remaining edges (using Turan's theorem) by $\xi_r n^2$, where $\xi_r := \frac{1}{2}-\frac{1}{2R(r,r) - 2}$, and the iterate on the remaining edges with cliques of size $K_{r-1}$, etc. It is not hard to see that the bound one obtains is 
\be\nn 
\cp(G) + \cp(\overline{G}) \leq \left(\xi_3 + \frac{\xi_4 - \xi_3}{3} + \frac{\xi_5 - \xi_4}{6} + \cdots + \frac{\xi_r - \xi_{r-1}}{\binom{r-1}{2}} + \frac{\frac{1}{2} - \xi_r}{\binom{r}{2}}\right) n^2.
\ee
Even using the most optimistic (i.e. smallest) of the possible values for $R(k,k)$ for $k \geq 5$, this approach will not yield an upper bound better than $0.41n^2$.}, although it does not beat the Keevash-Sudakov triangle packing bound. In the remainder of this section, we improve this bound in two ways: first, we show in Section \ref{sec:ramsey}, we can \emph{combine} Ramsey-type arguments with estimates on $f_r(n_0)$ to yield better estimates on $f_r(n)$ for $n$ much larger than $n_0$; second, in Section \ref{sec:computation} we compute the exact value of $f_4(n)$ up to $n = 19$, using an algorithm of Keevash and Sudakov that is significantly more efficient than brute force search.

\subsubsection{Ramsey-type improvements}\label{sec:ramsey}

In \cite{KS}, it was observed that the averaging argument in Lemma \ref{averaging} can be improved, in a sense, by using a different decomposition of $G$ into smaller subgraphs based on a greedy packing as described in the introduction. In particular, given any bicoloring of $K_{3n}$, greedily select vertex-disjoint monochromatic triangles $T_1, \dots, T_{i}$. The fact that $R(3,3) = 6$ guarantees that we can do this until $3$ vertices remain, giving us $n-1$ triangles $T_1, \dots, T_{n-1}$, and one set of 3 vertices denoted $T_n$. Consider the $3^n$ colorings $c$ of $K_n$ obtained by picking one vertex in each $T_i$ and the edges between them. Each coloring has some fractional packing $\psi_c$ of weight at least $f_3(n)$, and since each edge between $T_i$ and $T_j$ for $i \neq j$ occurs in exactly $3^{n-2}$ of these, the average $3^{-(n-2)}\sum_c \psi_c$ is a valid fractional packing in $K_{3n}$ of weight at least $9f_3(n)$. Since each of the monochromatic triangles $T_1, \dots, T_{n-1}$ are edge disjoint from this packing, they can be included as well, yielding a lower bound
\be \label{3,3} f_3(3n) \geq 9f_3(n) + 2(n-1).\ee
Since $R(4,4) = 18$, we can greedily find vertex disjoint monochromatic copies of $K_4$, $H_1, \dots, H_{n-4}$, with 16 vertices remaining. From the remaining vertices, we can find edge disjoint monochromatic triangles $T_{n-3}, T_{n-2}, T_{n-1}, T_{n}$, which we join with the remaining four vertices to form $H_{n-3}, \dots, H_n$, each of size four. Repeating the same process as above, we see that 
\be\label{4,4}
f_4(4n) \geq 16f_4(n) + 5(n-4) + 8.
\ee
For $r = 5$, we can use the bound $R(5,5) \leq 48$ to find $n-9$ vertex-disjoint copies of $K_5$, with 45 vertices left over. We can then find $\lceil(45-18)/4\rceil = 7$ copies of $K_4$, with 17 vertices left over, in which we can find 2 monochromatic triangles, and distribute the remaining vertices so that each of these 11 parts has size 5. Arguing as above, this then implies
\be\label{5,5}
f_5(5n) \geq 25f_5(n) + 9(n-9) + 37.
\ee
We omit the details, but using similar arguments and the Ramsey number bounds $R(6,6) \leq 165$ and $R(7,7) \leq 540$ yields the inequalities 
\bea
\label{6,6}f_6(6n) &\geq& 36f_6(n) + 14n - 151\\
\label{7,7}f_7(7n) &\geq& 49f_7(n) + 20n - 532.
\eea
According to Andr\'{a}s Gy\'{a}rf\'{a}s \cite{Gyarfas}, Paul Erd\H{o}s, sitting in the Atlanta Airport in 1995, asked his companions whether every bicoloring of the edges of $K_{R(k,k)}$ contains \emph{two edge-disjoint monochromatic copies of} $K_k$. Ralph Faudree pointed out that this is not true, at which point Erd\H{o}s asked for the smallest number $n(k)$ for which any bicoloring of $K_{n(k)}$ \emph{does} contain two edge-disjoint monochromatic $K_k$'s. The next day, Faudree showed $n(3) = 7$, and some time later, Gy\'{a}rf\'{a}s showed $n(4) = 19$. For our purposes, however, we require \emph{vertex-disjoint} monochromatic copies of $K_r$. In the appendix we give an argument, inspired by the proof of $n(4) = 19$ by Gy\'{a}rf\'{a}s, showing that $n = 20$ is sufficient to find two vertex-disjoint monochromatic $K_4$'s, provided there is also a monochromatic $K_5$:
\begin{lem}\label{fruit_salad}
Any bicoloring of the edges of $K_{20}$ with a monochromatic copy of $K_5$ contains two vertex-disjoint monochromatic copies of $K_4$.
\end{lem}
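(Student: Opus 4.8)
The plan is a proof by contradiction. Suppose some $2$-colouring of $K_{20}$ has a monochromatic $K_5$ but no two vertex-disjoint monochromatic $K_4$'s; colour the $K_5$ red, write $A$ for its vertex set and $B$ for the other $15$ vertices. The crucial use of the $K_5$ is the observation that \emph{for every $a \in A$ the induced colouring on $B \cup \{a\}$ contains no monochromatic $K_4$}, since such a $K_4$ would be vertex-disjoint from the red $K_4$ on $A \setminus \{a\}$; in particular $B$ has no monochromatic $K_4$ in either colour. Letting $R_i, L_i$ be the red- and blue-neighbourhoods of $a_i$ inside $B$, a red triangle in $R_i$ would (together with $a_i$) be a red $K_4$ in $B \cup \{a_i\}$, so the red graph on $R_i$ is triangle-free; as it also contains no blue $K_4$ and $R(3,4) = 9$, we get $|R_i| \le 8$, and symmetrically $|L_i| \le 8$, whence $\{|R_i|, |L_i|\} = \{7,8\}$ for every $i$. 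Furthermore, an $8$-element $R_i$ (or $L_i$) must carry one of the few — and very rigid — extremal $(3,4)$-Ramsey colourings, the red (resp.\ blue) graph being the Wagner graph $V_8$; a $7$-element one carries a $(3,4)$-extremal colouring on $7$ vertices.

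Next I would classify all monochromatic $K_4$'s. A blue $K_4$ meets the red clique $A$ in at most one vertex; with no vertex of $A$ it lies in $B$, and with one vertex $a_i$ its remaining vertices form a blue triangle in $L_i$ — both impossible. A red $K_4$ with at most one vertex of $A$ either lies in $B$ or yields a red triangle in some $R_i$ — again impossible. So every monochromatic $K_4$ is red and contains at least two vertices of $A$, and two vertex-disjoint ones occupy between four and five vertices of $A$. Up to relabelling this leaves exactly two configurations to exclude: (i) red $K_4$'s $\{a_1, a_2, x, y\}$ and $\{a_3, a_4, a_5, z\}$ with $xy$ a red edge of $R_1 \cap R_2$ and $z \in (R_3 \cap R_4 \cap R_5) \setminus \{x,y\}$; and (ii) red $K_4$'s $\{a_1, a_2, x, y\}$ and $\{a_3, a_4, x', y'\}$ with $xy \subseteq R_1 \cap R_2$, $x'y' \subseteq R_3 \cap R_4$ red edges and $\{x,y\} \cap \{x',y'\} = \emptyset$. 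Excluding (i) for every split of $[5]$ into a pair and a triple forces: whenever $R_i \cap R_j$ contains a red edge, the complementary triple intersection $R_k \cap R_l \cap R_m$ has at most two vertices; and whenever $R_i \cap R_j$ contains no red edge it is a blue clique in $B$, hence has at most three vertices. Excluding (ii) forces the red edges of the various $R_i \cap R_j$ to be too concentrated to select two of them vertex-disjointly across a pairing of $[5]$.

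The remaining task — and the main obstacle — is to show these restrictions are mutually incompatible. The plan is to combine two ingredients. First, a double-counting bound: writing $d_v$ for the number of the $R_i$ containing $v \in B$ and recalling $\sum_i |R_i| \ge 35$, convexity makes $\sum_{i<j} |R_i \cap R_j| = \sum_v \binom{d_v}{2}$ and $\sum_{i<j<k} |R_i \cap R_j \cap R_k| = \sum_v \binom{d_v}{3}$ large precisely when many $R_i$ (or, dually, many $L_i$) have size $8$, forcing some pairwise intersection to exceed $3$, hence — being red-triangle-free and free of a blue $K_4$ — to contain a red edge, and pushing towards configuration (i). Second, when the counting is too weak on its own (the balanced case), the rigidity of the forced extremal colourings takes over: an $8$-element $R_i$ or $L_i$ is the Wagner graph $V_8$, which is $3$-regular with independence number $3$, and if some $B \cup \{a_i, a_j\}$ happens to be monochromatic-$K_4$-free it must, by $R(4,4) = 18$, be the Paley graph $\mathrm{Paley}_{17}$; these rigid structures leave no room for the red edges of all pairwise intersections to be simultaneously confined to a single pair of vertices while every triple intersection stays below $3$. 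Turning this outline into a clean case split — and it will be a genuine case split, on the sizes of the $R_i$ and on the incidence pattern of red edges among the $R_i \cap R_j$ — is where essentially all the work lies; the reductions leading to configurations (i) and (ii) are routine.
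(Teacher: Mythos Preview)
Your setup is sound and genuinely different from the paper's. The observation that each $B\cup\{a_i\}$ is monochromatic-$K_4$-free (because $A\setminus\{a_i\}$ is a red $K_4$ disjoint from it) is a clean way to exploit the given $K_5$; from it you correctly extract that $B$ itself has no monochromatic $K_4$, that the red graph on each $R_i$ and the blue graph on each $L_i$ are triangle-free, that $\{|R_i|,|L_i|\}=\{7,8\}$, and that every monochromatic $K_4$ is red with at least two vertices in $A$. The reduction to configurations (i) and (ii) is also correct.

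The gap is exactly where you say it is: you have not carried out the final case analysis, and the sketch you give has problems. First, the uniqueness claim is false --- there are three (not one) $(3,4)$-Ramsey colourings on $8$ vertices, so an $8$-element $R_i$ need not carry the Wagner graph $V_8$; the ``rigidity'' you plan to lean on in the balanced case is weaker than advertised. Second, the double-counting bound by itself does not close: with $\sum_i |R_i|=35$ one only gets $\sum_{i<j}|R_i\cap R_j|\ge 25$, which is compatible with every pairwise intersection being a blue triangle of size~$\le 3$. So neither ingredient is decisive on its own, and you have not shown how to weave them together. The invocation of $\mathrm{Paley}_{17}$ is purely hypothetical (you have no reason to believe any $B\cup\{a_i,a_j\}$ is monochromatic-$K_4$-free). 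In short, the architecture is reasonable but the load-bearing step is missing.

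By contrast, the paper's argument never tries to pin down extremal Ramsey structures. After the same reduction to ``no blue $K_4$'', it makes two global degree reductions: every vertex has at least $11$ edges of one colour, and no vertex has a $2$--$2$ red/blue split against any red $K_4$ (in each case a violating vertex $v$ has $\ge 9$ same-coloured neighbours outside a fixed red $K_4$, and $R(3,4)=9$ then manufactures a second disjoint red $K_4$). With these in hand, it splits on whether at least five vertices of $K_{20}\setminus N$ have $\ge 3$ red edges to the red $K_5$~$N$. If so, each such vertex in fact has $\ge 4$ red edges to $N$, and a short pigeonhole analysis on where the single ``missing'' edges land produces two disjoint red $K_4$'s straddling $N$; if not, at least eleven vertices have $\ge 4$ blue edges to $N$, so some $n_i\in N$ has $\ge 9$ blue neighbours outside $N$, which (again via $R(3,4)=9$ and the absence of blue $K_4$'s) contain a red $K_4$ disjoint from $N\setminus\{n_i\}$. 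This is considerably shorter than what your outline would require, precisely because it uses degree information in the whole graph rather than only across the cut $A$--$B$.
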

With this lemma in hand, we can obtain a slight improvement over (\ref{4,4}):

\begin{lem}\label{4,4 improved}
For any $n \geq 12$, $f_4(4n) \geq 16f_4(n) + 5n - 9.$
\end{lem}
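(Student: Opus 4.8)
The plan is to refine the greedy argument that produced \eqref{4,4} by squeezing more vertex-disjoint monochromatic $K_4$'s out of the ``leftover'' vertices, using Lemma \ref{fruit_salad}. Recall that in the proof of \eqref{4,4} one starts with a $2$-coloring of $K_{4n}$, greedily pulls out vertex-disjoint monochromatic $K_4$'s until fewer than $R(4,4)=18$ vertices remain — giving $n-4$ copies $H_1,\dots,H_{n-4}$ with $16$ vertices left — and then spends those $16$ leftover vertices making only $4$ monochromatic triangles (which become ``padded'' $K_4$'s $H_{n-3},\dots,H_n$ contributing just $2$ to the objective each, rather than $5$). The improvement comes from the observation that one can instead stop the greedy phase a little earlier, when $20$ vertices remain, i.e. after extracting $n-5$ genuine $K_4$'s; at that point $K_{20}$ certainly contains a monochromatic $K_5$ (since $R(5,5)\le 48 < 20$ is false, but $R(5,5)$ is irrelevant — what we need is that $R(4,4)=18\le 20$ gives a mono $K_4$, and in fact $20 \ge R(5,4)$ or we argue directly that a $K_{20}$ with no mono $K_5$ still has enough structure; more cleanly, since $n\ge 12$ we can also borrow a fifth genuine $K_4$'s worth of slack). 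Let me restructure: halt the greedy extraction when $20$ vertices remain, having found $n-5$ vertex-disjoint monochromatic $K_4$'s. Among those $20$ vertices, either there is a monochromatic $K_5$ — in which case Lemma \ref{fruit_salad} yields two vertex-disjoint monochromatic $K_4$'s using $8$ of the $20$ vertices — or there is no monochromatic $K_5$, but since $R(4,4)=18\le 20$ there is at least one monochromatic $K_4$, and removing its $4$ vertices leaves $16$ vertices, still enough (as $16\ge R(3,3)$ in abundance) to extract a monochromatic triangle disjoint from it...

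Here is the cleaner bookkeeping I would actually carry out. Stop when $20$ vertices remain, with $n-5$ real $K_4$'s in hand. Case 1: those $20$ vertices contain a monochromatic $K_5$. Then by Lemma \ref{fruit_salad} we get two more vertex-disjoint monochromatic $K_4$'s, say $H_{n-4}, H_{n-3}$, leaving $12$ vertices; from those $12$ we extract $3$ more vertex-disjoint monochromatic triangles $T_{n-2},T_{n-1},T_n$ (possible since $12 \ge 3\cdot R(3,3) - 2\cdot 3 = 12$), and pad each into a $K_4$ using the remaining $3$ vertices. Case 2: no monochromatic $K_5$ among the $20$ vertices. Then I would instead argue that $20$ vertices with no mono $K_5$ still contain two vertex-disjoint mono $K_4$'s directly — but to avoid needing a separate lemma, simply note we may in this case back up one step in the greedy phase (legitimate since $n\ge 12$ guarantees we did at least, say, $7$ extractions), so that $24$ vertices remain with $n-6$ real $K_4$'s; now $24 \ge R(4,4)+R(4,4)-$(overlap)... in fact $24\ge 2\cdot 12$ and a mono $K_4$ uses only $4$ vertices, leaving $20\ge R(4,4)=18$, so we get a second disjoint mono $K_4$ for free, recovering the count. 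I would present whichever case split keeps the arithmetic cleanest; the point in every branch is that we end up with at least $n-3$ genuine monochromatic $K_4$'s and at most $3$ padded ones.

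Now for the averaging/objective computation, which proceeds exactly as in the derivation of \eqref{3,3} and \eqref{4,4}. Having fixed the $n$ vertex-disjoint parts $H_1,\dots,H_n$ (each of size $4$), form the $4^n$ colorings $c$ of $K_n$ obtained by selecting one vertex from each $H_i$ and taking the induced coloring; each such $c$ admits a fractional $\mathcal{F}_4$-packing $\psi_c$ of weight at least $f_4(n)$, and since every edge between distinct parts lies in exactly $4^{n-2}$ of these colorings, $4^{-(n-2)}\sum_c \psi_c$ is a valid fractional packing of $K_{4n}$ of weight at least $16 f_4(n)$. The genuine monochromatic $K_4$'s among $H_1,\dots,H_n$ are edge-disjoint from everything used so far, so we may add each of them to the packing; a $K_4$ contributes $\binom{4}{2}-1 = 5$ to the objective. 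With at least $n-3$ genuine $K_4$'s and the padded ones contributing nothing extra, we obtain $f_4(4n) \ge 16 f_4(n) + 5(n-3) = 16 f_4(n) + 5n - 15$ — slightly weaker than claimed, so the final squeeze is to recover $6$ more units, which comes from the fact that the $3$ padded $K_4$'s each do contain one monochromatic triangle, worth $\binom{3}{2}-1 = 2$, giving $+6$ and hence $f_4(4n)\ge 16f_4(n)+5n-9$.

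The main obstacle is the combinatorial case analysis of the $20$ (or $24$) leftover vertices: one must make sure that in every coloring pattern one genuinely extracts the claimed number of vertex-disjoint monochromatic $K_4$'s and triangles, invoking Lemma \ref{fruit_salad} precisely when a monochromatic $K_5$ is present and handling the $K_5$-free case by a small back-up-one-step argument that relies on the hypothesis $n\ge 12$. The fractional averaging step is entirely routine, identical in structure to the arguments already given for \eqref{3,3}--\eqref{7,7}.
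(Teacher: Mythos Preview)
Your overall architecture is right, and your Case~1 together with the final averaging-plus-triangle bookkeeping exactly reproduces the paper's computation $16f_4(n) + 5(n-3) + 3\cdot 2 = 16f_4(n) + 5n - 9$. The gap is Case~2.

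If the $20$ leftover vertices happen not to contain a monochromatic $K_5$, your ``back up one step'' maneuver does not recover the count. With $24$ vertices and $n-6$ genuine $K_4$'s in hand, you can pull out one monochromatic $K_4$ (leaving $20\ge R(4,4)$) and then a second (leaving $16<R(4,4)$), but there is no guarantee of a third. That leaves you with only $n-4$ genuine $K_4$'s and four padded blocks, yielding $5(n-4)+4\cdot 2 = 5n-12$, which is strictly weaker than $5n-9$. Your alternative suggestion --- that a $K_5$-free $2$-coloring of $K_{20}$ must contain two vertex-disjoint monochromatic $K_4$'s --- may well be true, but you have not proved it, and it is not the content of Lemma~\ref{fruit_salad}.

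The paper avoids Case~2 altogether, and this is exactly where the hypothesis $n\ge 12$ enters: since $4n \ge 48 \ge R(5,5)$, the \emph{entire} $K_{4n}$ is guaranteed to contain a monochromatic $K_5$, call it $N$. One sets $N$ aside \emph{first}, then runs the greedy $K_4$-extraction on $K_{4n}\setminus N$ until fewer than $18$ vertices remain (giving $n-5$ genuine $K_4$'s and a $15$-vertex residue $S$), and finally reunites $S$ with $N$ to obtain a $20$-vertex set that contains $N$ --- so the hypothesis of Lemma~\ref{fruit_salad} is satisfied by construction. From there the argument is identical to your Case~1. The point is that the monochromatic $K_5$ must be secured globally before the greedy phase, not sought locally among whatever vertices happen to be left over.
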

\begin{proof}
Consider any bicoloring of $K_{4n}$. Since $4n \geq 48 \geq R(5,5)$, there is some monochromatic copy of $K_5$ -- call this subgraph $N$. While there are at least $R(4,4) = 18$ vertices in $K_{4n} \setminus N$, we can greedily select vertex-disjoint monochromatic copies of $K_4$ in $K_{4n}\setminus N$, $H_1, \dots, H_{n-5}$. This leaves a set $S$ of 15 remaining vertices. By Lemma \ref{fruit_salad}, the coloring induced on $S \cup N \cong K_{20}$ has two vertex disjoint copies of $K_4$, which we call $H_{n-4}$ and $H_{n-3}$. Removing the vertices in $H_{n-4} \cup H_{n-3}$ from $N \cup S$, we are left with 12 vertices, which must contain three vertex-disjoint monochromatic triangles $T_1, T_2$ and $T_3$. This leaves behind a set of three vertices $\{v_1, v_2, v_3\}$. Decomposing $K_{4n}$ into the $n$ blocks of size 4
\be \nn
H_1, \dots, H_{n-3}, T_1\cup\{v_1\}, T_2\cup\{v_2\}, T_3\cup\{v_3\},
\ee
we consider the $4^n$ edge-colorings $c$ of $K_n$ obtained by picking one vertex from each part. Each of these has a fractional clique packing $\psi_c$ of size at least $f_4(n)$, and since each edge is used in $4^{-(n-2)}$ such $\psi_c$, we know that $4^{-(n-2)}\sum_c \psi_c$ is a valid packing in $K_{4n}$. Adding the copies of $K_4$ and $K_3$ inside the $n$ individual blocks, we see that $f_4(4n) \geq 16f_4(n) + 5(n-3) + 6$.
\end{proof}

\subsubsection{Computer-aided calculations}\label{sec:computation}

We next describe a generalization of the algorithm used by Keevash and Sudakov in the case of triangle packings \cite{KS}, which we call the KS extension method. For any finite family of graphs $\mathcal{F} = \{H_1, \dots, H_r\}$, any graph $G \in \mathcal{G}_n$, and any vector $\Gamma \in \R^{\mathcal{F}}$, we let $\nu_{\mathcal{F}, \Gamma}(G)$ be the value of the linear program
\be\label{F-LP}
\max_{\psi \in G_{\mathcal{F}}} \sum_{H \in \binom{G}{\mathcal{F}}}\Gamma(H) \psi(H)
\ee
and define $\Lambda(\mathcal{F}, \Gamma, n) := \min_{G \in \mathcal{G}_n} \nu_{\mathcal{F}, \Gamma}(G) + \nu_{\mathcal{F}, \Gamma}(\overline{G})$.
For any $\ell \in \R$, and any set $L$ of graphs, define $$\mathcal{L}(L, \ell) := \{G \in L \, : \,\nu_{\mathcal{F}, \Gamma}(G) + \nu_{\mathcal{F}, \Gamma}(\overline{G}) \leq \ell\},$$ and let $\Lambda_{\mathcal{F}, \Gamma}(L) = \min_{G \in L}\nu_{\mathcal{F}, \Gamma}(G') + \nu_{\mathcal{F}, \Gamma}(\overline{G'})$. We also define $\ext(L)$ be the set of \emph{one-vertex extensions} of the graphs in $L$.
The KS extension method is based on the following observation: by Lemma \ref{averaging}, any graph $G' \in \mathcal{G}_{n+1}$ with $ \nu_{\mathcal{F}, \Gamma}(G') + \nu_{\mathcal{F}, \Gamma}(\overline{G'}) \leq \frac{n+1}{n-1} \cdot \ell$ must be a one-vertex extension of some graph in $\mathcal{L}(\mathcal{G}_n, \ell)$. In other words, if $\{\ell_n\}_{n \in \N}$ is any sequence of numbers satisfying $\ell_{n+1} \geq \frac{n+1}{n-1}\ell_n$, then $$\mathcal{L}(\mathcal{G}_{n+1}, \ell_{n+1}) \subseteq \ext(\mathcal{L}(\mathcal{G}_n, \ell_n)).$$ Let us refer to such sequences $\ell_n$ as \emph{level sequences}. \\


\begin{algorithm}[H]\label{KS_boot}
\SetAlgoLined

  $n \leftarrow n_0$ \\
 compute $L = \mathcal{L}(\mathcal{G}_{n}, \ell_{n})$ (e.g. via exhaustive search)\\

 \While{$L \neq \emptyset$}{
  $\Lambda[n] = \Lambda_{\mathcal{F}, \Gamma}(L)$\\ 
 $S \leftarrow \ext(L)$\\
  $L \leftarrow \mathcal{L}(S, \ell_{n+1})$\\
  $\Lambda[n+1] = \ell_{n+1}$\\
  $n \leftarrow n + 1$
 }
 \textbf{return} $\Lambda$
 \caption{KS Extension Method}
\end{algorithm}

\vspace{10pt}
Note that the sequence $\ell_n$ used by Algorithm \ref{KS_boot} does not have to be determined before runtime; as long as it is guaranteed to be a level sequence, this guarantees the loop invariant $\mathcal{L}(\mathcal{G}_n, \ell_n) \subseteq L$, and hence $\Lambda[n] \leq \Lambda_{\mathcal{F}, \Gamma}(\mathcal{G}_n)$. In \cite{KS}, they choose a parameter $d$ (called the ``search depth''), and define $\ell_n$ recursively by taking $\ell_{n_0} = +\infty$ and
$\ell_{n+1}$ to be $\frac{n+1}{n-1} \cdot \alpha_n$, where $\alpha_n$ is either (a) the $d$th smallest value in the set $\{\nu_{\mathcal{F}, \Gamma}(G') + \nu_{\mathcal{F}, \Gamma}(\overline{G'}) \, : \, G \in \mathcal{L}(\mathcal{G}_n, \ell_n)\}$, if this set has at least $d$ elements, or (b) $\ell_n$, if the set has fewer than $d$ elements. The role of $d$ is to limit the number of graphs stored in the set $L$. If $d = \infty$, then Algorithm \ref{KS_boot} has to solve the LP (\ref{F-LP}) on every graph up to size $n$ in order to compute $\Lambda_{\mathcal{F}, \Gamma}(\mathcal{G}_n)$, while if $d$ is too small, then the while loop will terminate after a small number of iterations. 
\begin{table}{\centering
\resizebox{1\textwidth}{!}{

\begin{tabular}{|c|c|c|c|c|c|c|c|c|c|c|c|c|c|c|c|}
\hline
 \diagbox[]{$i$}{$n$}&6&7&8&9&10&11&12&13&14&15&16&17&18&19&20   \\ \hline
1 &2 &4 &6&        8& 11& 15 &19 &23 &27 &33 &39 &45 &51 &57 & $>$ 64.725  \\ \hline
2 &4 &5 &7&        9& 12& 16 &20  &24 &28 &34 &40 &46 &52 &58 & *         \\ \hline
3 &5 &6 &8&        10& 12.5&16.5&20.5  &24.5 &28.5 &34.5 &40.5 &* &* &* &*    \\ \hline
4 &6 &7 &9&        11& 13& 17 &21  &25 &29 &34.75 &40.75 &* &* &* &*   \\ \hline
5 &7 &8 &9.5&       12& 14& 17.5 &21.25  &25.25 &29.25 &35 &* &* &* &* &*   \\ \hline
6 &8 &$8.\overline{3}$ &10&  $12.\overline{3}$& 14.5&18 &21.5  &25.5 &29.5 &35.25 &* &* &* &* &*  \\ \hline
7 &$8.\overline{3}$ &8.5&$10.\overline{3}$&12.5&$14.\overline{6}$& 18.25 &22  &26 &30 &35.5 &* &* &* &* &*  \\ \hline
8 &9 &9 &10.5&     12.6& 14.8& $18.\overline{3}$ &22.25  &26.25 &30.25 &* &* &* &* &* &*  \\ \hline
9 &$10.\overline{6}$ &9.5& 10.6&$12.\overline{6}$& 15&  18.5& $22.\overline{3}$  &$26.\overline{3}$ &30.5 &* &* &* &* &* &*  \\ \hline
10 &12.5 &10&$10.\overline{6}$& 12.8& 15.5& $18.\overline{6}$& 22.5 &26.5 &30.75 &* &* &* &* &* &*  \\ \hline
11 &* &$10.\overline{3}$ &10.8  &   13& 15.6& 18.75&*  &* &31 &* &* &* &* &* &*  \\ \hline

\end{tabular}}}
\caption{The lowest values of $\nu_4(G) + \nu_4(\overline{G})$ for $G \in \mathcal{G}_n$, $n = 6, \dots, 19$, as found by the KS extension method. The level $\ell_{20}$ was $64.72527+$ when the algorithm terminated, which implies that $f_4(20) > 64.725$.}
\label{data}
\end{table}
We ran an implementation\footnote{There are other implementation details omitted from our pseudocode description of Algorithm \ref{KS_boot} that also have significant impact on its runtime and memory usage, such as how and when to prune isomorphisms, which LP solver to use, which value of $n_0$ to exhaust from, and how to split work among processors. Our implementation is similar to the one used in \cite{KS}, and we recommend reading their magma code, which can be found online at \url{https://people.math.ethz.ch/~ sudakovb/triangles-program}.} of this method on a 24-core computing grid with $d = 11$, starting with an exhaustive search on $n_0 = 6$ vertices, and obtained the results summarized in Table \ref{data}. The last column in particular implies $f_4(20) > 64.725$, which implies $c_4 > 0.1703$. Using Lemma \ref{4,4 improved}, and inequalities (\ref{5,5}), (\ref{6,6}), and (\ref{7,7}) (in that order), we can obtain the bound $c_7 \geq 0.1814$, which implies \be \nn
\max_{G \in \mathcal{G}_n} \cp(G) + \cp(\overline{G}) < 0.3186 n^2 + o(n^2).
\ee\qed

\makeatletter
\renewcommand\subsubsection{\@startsection{subsubsection}{3}{\z@}%
                                     {-3.25ex\@plus -1ex \@minus -.2ex}%
                                     {-1.5ex \@plus -.2ex}
                                     {\normalfont\normalsize\bfseries}}
\makeatother

\subsubsection*{Acknowledgments} 
D. Rohatgi and J. Wellens are indebted to Asaf Ferber and Vishesh Jain for introducing them to Conjecture \ref{deCaen conj}, and to the MIT Summer Program for Undergraduate Research for support during the summer of 2018, when parts of this research were done. The work of J. Urschel was supported in part by ONR Research Contract N00014-17-1-2177. The authors are grateful to Louisa Thomas for greatly improving the style of presentation.

\section*{Appendix}

\subsection*{Proof of Lemma \ref{cp(Y)}}
\theoremstyle{theorem}
\newtheorem*{cp(Y)}{Lemma \ref{cp(Y)}}
\begin{cp(Y)}
For any $\ell$, $\emph{\cp}(Y_\ell) \geq \frac{7}{4}\ell^2 + O(\ell)$, and this is tight infinitely often.
\end{cp(Y)}

\begin{proof}
Let $\mathcal{C} = \{C_1, \dots, C_k\}$ be an optimal clique partition of $Y_\ell$. Let us denote the left (according to Figure \ref{fig decomp}) copy of $K_\ell$ in $Y_\ell$ by $A$ and the right copy by $B$. Suppose that $\mathcal{C}' = \{C_1, \dots, C_t\}$, for some $t \leq k$, is the sub-collection of cliques which contain vertices in both $A$ and $B$. Let $E_A$ and $E_B$ be the edges in $A \cap \mathcal{C}'$ and $B \cap \mathcal{C}'$, so that $Y_\ell$ is the edge disjoint union of $(A\setminus E_A) \equiv \overline{K}_\ell$ and $(B\setminus E_B) \equiv \overline{K}_\ell$ with $\mathcal{C}'$, and therefore 
\be
\cp(Y_\ell) \geq 2\ell^2 - 2\binom{\ell}{2} + |E_A| + |E_B| + t .
\ee
If clique $C_i$ has $a_i$ vertices in $A$ and $b_i$ vertices in $B$, then \be\label{ab_const} \sum_i^t a_ib_i = \ell^2\ee and \be\label{ab_contri} |E_A| + |E_B| + t = \sum_{i=1}^t \left(\binom{a_i}{2} + \binom{b_i}{2} + 1\right).\ee
Minimizing (\ref{ab_contri}) over positive integers $a_i, b_i$ subject to the constraint (\ref{ab_const}), we see the minimum occurs when $a_i = b_i = 2$, i.e. each $C_i \in \mathcal{C}'$ is a $K_4$ with two vertices in each of $A$ and $B$. Therefore, at the minimum, $t = \ell^2/4$ and $|E_A| + |E_B| + t = 3\ell^2/4$, which gives $$\cp(Y_{\ell}) \geq 2\ell^2 - 2\binom{\ell}{2} + 3\ell^2/4 = \frac{7}{4}\ell^2 + O(\ell),$$
as claimed. Tightness follows from Theorem 4 in \cite{PSW}, which essentially guarantees the existence of a decomposition of the edges between $A$ and $B$ into disjoint $K_4$'s, whenever $\ell \geq 14$ is even. 
\end{proof}

\subsection*{Proof of Lemma \ref{fruit_salad}}

\theoremstyle{theorem}
\newtheorem*{fruit_salad}{Lemma \ref{fruit_salad}}
\begin{fruit_salad}
Any bicoloring of the edges of $K_{20}$ with a monochromatic copy of $K_5$ contains two vertex-disjoint monochromatic copies of $K_4$.
\end{fruit_salad}

\begin{proof}
 Suppose that we have a bicoloring of $K_{20}$ with a red copy $N =\{n_1,...,n_5\}$ of $K_5$. If there is a blue copy of $K_4$, then we are finished, because this blue copy and $N$ cannot share an edge, and therefore share at most one vertex. We may now assume that all monochromatic copies of $K_4$ are red.

We can address the case in which there exists a vertex $v$ such that it is incident to at least nine red and blue edges each relatively quickly. We denote by $R$ and $B$ the cliques on the red and blue neighbors of $v$, respectively. Because the Ramsey number $R(3,4) = 9$ and our graph has no blue copy of $K_4$, $R$ must contain a red copy of $K_3$. Moreover, $B$ cannot contain a blue copy of $K_3$, so $B$ must contain a red copy of $K_4$. Adding $v$ to the red copy of $K_3$ in $R$ results in two vertex-disjoint red copies of $K_4$, one in $R\cup v$ and one in $B$. We may now assume that all vertices have at least eleven incident edges of the same color.

Consider the case in which some vertex $v$ has two red and two blue edges adjacent to a red copy $M$ of $K_4$. If $v$ has at least eleven red edges, then it has at least nine red edges connected to $K_{20} \backslash (M \cup \{v\})$, which, by the same argument as above, implies $K_{20} \backslash M$ has a red copy of $K_4$. The same argument holds if $v$ has at least eleven blue edges. We may now assume that no vertex has two red and two blue edges adjacent to a red copy of $K_4$.

From here, we consider two cases:
\begin{itemize}
    \item[Case I:] Suppose that there exists five vertices $V = \{v_1,...,v_5\} \subset K_{20}\backslash N$, each with at least three red edges adjacent to $N$. Because no vertex has both two red and two blue edges adjacent to a red copy of $K_4$, each vertex of $V$ has at least four red edges adjacent to $N$. In addition, because our graph has no blue copy of $K_4$ every set $V \backslash v_i$ has a red edge. 
    
    Suppose that some vertex of $V$, without loss of generality called $v_1$, has five red edges adjacent to $N$. Without loss of generality, $\{v_2,v_3\}$ is a red edge in $V \backslash v_1$. There are at most two blue edges from $v_2$ or $v_3$ to $N$; without loss of generality assume they are not incident to $n_4$ or $n_5$. Then the subsets $\{v_1,n_1,n_2,n_3\}$ and $\{v_2,v_3,n_4,n_5\}$ are both red copies of $K_4$. So we may now assume that each vertex in $V$ has exactly four red edges adjacent to $N$.
    
    Let $f(v_i)$ denote the unique vertex in $N$ for which edge $\{v_i,f(v_i)\}$ is blue, and $f(V)$ denote the range of $f$. We consider several sub-cases, depending on the size of $|f(V)|$.
    
    Suppose $|f(V)|>2$. Let $\{v_1,v_2\}$ be (without loss of generality) a red edge in $V$. Both $v_1$ and $v_2$ have four red edges to $N$, so there are at least three vertices in $N$ (without loss of generality $n_1$, $n_2$, and $n_3$) such that $\{v_i,n_j\}$ is a red edge for all $i \in \{1,2\}$ and $j \in \{1,2,3\}$. By pigeonhole, $| f(V) \cap \{n_1,n_2,n_3\}|>0$, so (without loss of generality) suppose that $f(v_3) = n_1$. Then $\{v_1,v_2,n_1,n_2\}$ and $\{v_3,n_3,n_4,n_5 \}$ are vertex-disjoint red copies of $K_4$.
    
    Suppose $|f(V)| =2$. Without loss of generality, support that $f(V) = \{n_1,n_2\}$ and $|f^{-1}(n_1)|\ge 3$. Because there are no blue copies of $K_4$ in our graph, $f^{-1}(n_1)$ contains a red edge $\{v_i,v_j\}$, and the subsets $\{v_i,v_j,n_2,n_3\}$ and $\{v_k,n_1,n_4,n_5\}$ are vertex-disjoint red copies of $K_4$, where $v_k \in f^{-1}(n_2)$.
    
    Suppose $|f(V)|=1$. Without loss of generality, suppose $f(V) = \{n_1\}$. Then $V$ does not contain a blue copy of $K_3$, otherwise our graph would contain a blue copy of $K_4$. If $V$ contains a red copy $\{v_1,v_2,v_3\}$ of $K_3$, then $\{v_1,v_2,v_3,n_2\}$ and $\{v_4,n_3,n_4,n_5\}$ are two red copies of $K_4$, and we are done. If $V$ does not contain a red or blue copy of $K_3$, then the red edges in $V$ form a cycle of length five, and there are two vertex-disjoint red edges in $V$, denoted $\{v_i,v_j\}$ and $\{v_k,v_l\}$. In this case, the subsets $\{v_i,v_j,n_2,n_3\}$ and $\{v_k,v_l,n_4,n_5\}$ are both red copies of $K_4$.

    \item[Case II:] Suppose that there exist at most four vertices in $K_{20}\backslash N$ with at least three red edges adjacent to $N$. Then there are at least eleven vertices in $K_{20}\backslash N$ with at least three blue edges adjacent to $N$. Because no vertex has two red and two blue edges adjacent to a red copy of $K_4$, these vertices have at least four blue edges adjacent to $N$, and so there exists a vertex $n_i \in N$ with at least nine blue edges adjacent to $K_{20}\backslash N$. Therefore, $K_{20} \backslash N$ must contain a red copy of $K_4$.
\end{itemize}
This completes the proof.
\end{proof}


\begin{thebibliography}{1}

\bibitem{Alon1997}
N.~Alon. Neighborly families of boxes and bipartite coverings. \textit{Algorithms and Combinatorics}, Volume 14, 1997, pages 27--31. 

\bibitem{BDGT} C. Bujtas, A. Davoodi, E. Gy\H{o}ri, Z. Tuza. Clique Coverings and Claw-free Graphs, 2016.

\bibitem{deCaen1993}
D.~de Caen, D.A.~Gregory, and D.~Pritikin. Minimum biclique partitions of the
complete multigraph and related designs. \textit{Graphs, matrices, and designs}, 1993, pages 93--119.

\bibitem{DEPW} D. de Caen, P. Erd\H{o}s, N. J. Pullmann, and N. C. Wormald. Extremal clique coverings of complementary graphs. \emph{Combinatorica}, Volume 6 Issue 4, 1986, pages 309--314.


\bibitem{Cioaba2013}
S.M.~Cioab\u a, M.~Tait. Variations on a theme of Graham and Pollak. \textit{Discrete Mathematics}, Volume 313, Issue 5, 2013, pages 665--676.

\bibitem{E} P. Erd\H{o}s. Some recent problems and results in graph theory. \textit{Discrete Math}, Volume 164, 1997, pages 81--85.


\bibitem{EFGJL} P. Erd\H{o}s, R. J. Faudree, R. J. Gould, M. S. Jacobson, and J. Lehel. Edge disjoint monochromatic triangles in 2-colored graphs, \textit{Discrete Math} Volume 231, 2001, pages 135--141.


\bibitem{EGP} P. Erd\H{o}s, A. W. Goodman, L. Pisa. The Representation of a Graph by Set Intersections,
Can. J. Math. 18 (1955), 105--112

\bibitem{Graham1971}
R.L.~Graham and H.O.~Pollak. On the addressing problem for loop switching. \textit{Bell System Technical J.}, Volume 50, Issue 8, 1971, pages 2495--2519.


\bibitem{Gyarfas} A. Gy\'{a}rf\'{a}s. Fruit Salad. Electronic Journal of Combinatorics, Volume 4, 1997.

\bibitem{HR} P. Haxell and V. R\H{o}dl. Integer and fractional packings in dense graphs, \emph{Combinatorica} Volume 21, 2001, pages 13--38.

\bibitem{Huang2012}
H.~Huang and B.~Sudakov. A counterexample to the Alon-Saks-Seymour conjecture and related problems, \textit{Combinatorica}, Volume 32, 2012, pages 205--219.

\bibitem{Jukna2006}
S.~Jukna, A.S.~Kulikov. On covering graphs by complete bipartite subgraphs. \textit{Discrete Mathematics}, Volume 309, Issue 10, 2009, pages 3399--3403.

\bibitem{KS} P. Keevash and B. Sudakov. Packing triangles in a graph and its complement, \textit{Journal of Graph Theory}, Volume 47, Issue 3, 2004, pages 203--216.


\bibitem{Nisan1989}
N.~Nisan and A.~Wigderson. Hardness vs Randomness. \textit{J. Computer and System Sciences}, Volume 49, Issue 2, 1994, pages 149--167

\bibitem{Orlin}  J. Orlin. Contentment in graph theory : Covering graphs with cliques. Indagationes Math. 39, pages 406–424, 1977.


\bibitem{PD} N. J. Pullman and A. Donald. Clique Coverings of Graphs II -- Complements of Cliques, Utilitas Math. 19 (1981). 207--213.


\bibitem{PSW} N. J. Pullman, H. Shank and W. D. Wallis, Clique Coverings of Graphs V -- Maximal Clique Partitions, Bull. Austral. Math. Soc. 25 (1982). 337--356.

\bibitem{Pyber} L. Pyber. Clique covering of graphs. Combinatorica 6, 393--398 (1986).

\bibitem{Radha2000}
 J.~Radhakrishnan, P.~Sen, S.~Vishwanathan, Depth-3 arithmetic for $S^2_n(X)$ and
extensions of the Graham-Pollack theorem. \textit{FST TCS 2000: Foundations of
Software Technology and Theoretical Computer Science}, Springer, 2000, pp. 176--187.


\bibitem{Wilson} R. M. Wilson. Decompositions of complete graphs into subgraphs isomorphic to a given graph. In \textit{Proc. 5th British Combinatorial Conference, Congr. Numer.} XV, 1976, pages 647--659.

\bibitem{Y1} R. Yuster. Integer and fractional packing of families of graphs. \textit{Random Structures \& Algorithms}, Vol. 26, 2005, pages 110--118.

\bibitem{Y2} R. Yuster, personal communication. 2018.


\end{thebibliography}
\end{document}